\theoremstyle{plain}
\newtheorem{thm}{Theorem}[section]
\newtheorem{lem}[thm]{Lemma}
\newtheorem{cor}[thm]{Corollary}
\newtheorem{prop}[thm]{Proposition}
\theoremstyle{definition}
\newtheorem{defn}[thm]{Definition}
\theoremstyle{remark}
\newtheorem{rem}[thm]{Remark}
\crefname{thm}{theorem}{theorems}
\crefname{lem}{lemma}{lemmas}
\crefname{cor}{corollary}{corollaries}
\crefname{prop}{proposition}{propositions}
\crefname{mainthm}{theorem}{theorems}
\crefname{maincor}{corollary}{corollaries}
\crefname{defn}{definition}{definitions}
\crefname{conj}{conjecture}{conjectures}
\crefname{example}{example}{examples}
\crefname{exercise}{exercise}{exercises}
\crefname{prob}{problem}{problems}
\crefname{quest}{question}{questions}
\crefname{rem}{remark}{remarks}
\crefname{claim}{claim}{claims}
\crefname{axiom}{axiom}{axioms}
\crefname{hyp}{hypothesis}{hypotheses}
\crefname{notation}{notation}{notations}
\crefname{case}{case}{cases}
\numberwithin{equation}{section}
\definecolor{darkgreen}{cmyk}{1,0,1,.2}
\definecolor{m}{rgb}{1,0.1,1}
\newdimen\theight
\def\TeXref#1{%
             \leavevmode\vadjust{\setbox0=\hbox{{\tt
                     \quad\quad  {\small \textrm #1}}}%
             \theight=\ht0
             \advance\theight by \lineskip
             \kern -\theight \vbox to
             \theight{\rightline{\rlap{\box0}}%
             \vss}%
             }}%
\begin{document}
\vskip .5cm
\title[ Biharmonic Maps on  Foliations]
{Generalized Chen's conjecture for biharmonic maps on foliations}

\author[X. S. Fu]{Xueshan Fu}
\address{Department of Mathematics\\
         Shenyang  University of  Technology\\
         Shenyang 110870\\
         People Republic of China}
\email{xsfu@sut.edu.cn}

\author[S. D. Jung]{Seoung Dal Jung}
\address{Department of Mathematics\\
         Jeju National University\\
         Jeju 63243\\
         Republic of Korea}
\email{sdjung@jejunu.ac.kr}

\thanks{The second author was supported by the 2022 scientific promotion program funded by Jeju National University.}

\subjclass[2010]{53C12; 53C43; 58E20.}
\keywords{Riemannian foliation, Transversally harmonic map,  Transversally biharmonic map, $(\mathcal F,\mathcal F')$-harmonic map, $(\mathcal F,\mathcal F')$-biharmonic map,  Generalized Chen's conjecture}

\begin{abstract}
In this paper,  we prove the generalized Chen's conjecture  for $(\mathcal F,\mathcal F')$-biharmonic map, which is a critical point of the transversal bienergy functional.
\end{abstract}
\maketitle
\section{Introduction}
On a Riemannian geometry,  harmonic maps  play a central role to study the geometric properties. They are critical points of the energy functional $E(\phi)$ for smooth maps $\phi: (M,g)\to (M',g')$, where
\begin{align*}
E(\phi) =\frac12\int_M |d\phi|^2 \mu_M,
\end{align*}
where $\mu_M$ is the volume element.  It is well known that  harmonic map is a solution of the Euler-Largrange equation $\tau(\phi)=0$, where $\tau(\phi)={\rm tr}_g (\nabla d\phi)$ is the tension field. 

In 1983, J. Eells and L. Lemaire  extended the notion of harmonic map to biharmonic map, which is a critical points of the bienergy functional $E_2(\phi)$, where
\begin{align*}
E_2(\phi) = \frac12\int_M |\tau(\phi)|^2 \mu_M.
\end{align*}
It is well-known \cite{JI} that harmonic maps are always biharmonic. But the converse is not true. At first, B.Y. Chen \cite{CH} raised so called Chen's conjecture and later, R. Caddeo et.al. \cite{CMP} raised the generalized Chen's conjecture. That is,

{\bf Generalized Chen's conjecture:} {\it Every biharmonic submanifold of a Riemannian manifold of non-positive curvature must be harmonic.}

About the generalized Chen's conjecture, Nakauchi et. al. \cite{NUG} showed the following.

\begin{thm} \cite{NUG}  Let $(M,g)$ be a complete Riemannian manifold and $(M',g')$ be of non-positive sectional curvature.  Then

(1) every biharmonic map $\phi:M\to M'$ with finite energy and finite bienergy must be harmonic.

(2) In the case $Vol(M)=\infty$, every biharmonic map with finite bienergy  is harmonic.

\end{thm}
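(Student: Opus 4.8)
The plan is to prove a Bochner-type vanishing result for biharmonic maps under curvature and integrability hypotheses. Let me think about how to approach this.The plan is to run a Bochner-type argument driven by the biharmonic map equation, with the non-positivity of the target curvature producing subharmonicity and completeness entering only through cut-off functions. Writing $\tau=\tau(\phi)$ and letting $\Delta^\phi=\nabla^*\nabla$ denote the (non-negative) rough Laplacian on sections of $\phi^{-1}TM'$, the Euler--Lagrange equation $\tau_2(\phi)=0$ reads $\Delta^\phi\tau=\sum_i R^{M'}(\tau,d\phi(e_i))d\phi(e_i)$ for a local orthonormal frame $\{e_i\}$. First I would pair this equation with $\tau$: since $(M',g')$ has non-positive sectional curvature, each summand $\langle R^{M'}(\tau,d\phi(e_i))d\phi(e_i),\tau\rangle\le 0$, so $\langle\Delta^\phi\tau,\tau\rangle\le 0$ pointwise. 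Substituting into the Weitzenb\"ock identity $\tfrac12\Delta|\tau|^2=-\langle\Delta^\phi\tau,\tau\rangle+|\nabla\tau|^2$ (with $\Delta=\operatorname{div}\operatorname{grad}$ on functions) gives $\tfrac12\Delta|\tau|^2\ge|\nabla\tau|^2\ge 0$; in particular $|\tau|^2$ is subharmonic and $|\nabla\tau|^2\le\tfrac12\Delta|\tau|^2$.

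The second step is the cut-off estimate. By completeness (Hopf--Rinow) choose, for each $R>0$, a Lipschitz function $\eta$ with $\eta\equiv 1$ on $B(x_0,R)$, $\operatorname{supp}\eta\subset B(x_0,2R)$, and $|\nabla\eta|\le C/R$. Multiplying the previous inequality by $\eta^2$, integrating over $M$, and integrating by parts yields $\int_M\eta^2|\nabla\tau|^2\le\tfrac12\int_M\eta^2\Delta|\tau|^2=-\int_M\eta\,\langle\nabla\eta,\nabla|\tau|^2\rangle$. Applying $|\nabla|\tau|^2|\le 2|\nabla\tau||\tau|$ together with Cauchy--Schwarz and Young's inequality to absorb $\int_M\eta^2|\nabla\tau|^2$ into the left-hand side, one is left with $\int_M\eta^2|\nabla\tau|^2\le \tfrac{C'}{R^2}\int_M|\tau|^2$. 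Since finite bienergy means $\int_M|\tau|^2=2E_2(\phi)<\infty$, letting $R\to\infty$ forces $\nabla\tau=0$; hence $\tau$ is parallel and $|\tau|$ is constant.

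From here the two cases diverge. For (2), if $\operatorname{Vol}(M)=\infty$ then $\int_M|\tau|^2=|\tau|^2\operatorname{Vol}(M)$ is finite only if the constant $|\tau|$ vanishes, so $\tau\equiv 0$ and $\phi$ is harmonic. For (1) I would introduce the auxiliary $1$-form $\omega(X)=\langle d\phi(X),\tau\rangle$. Using $\nabla\tau=0$ and $\tau=\operatorname{tr}_g\nabla d\phi$, a direct computation gives $\operatorname{div}\omega^\sharp=|\tau|^2$. Testing this identity against $\eta^2$, integrating by parts, and using the pointwise bound $|\omega^\sharp|\le|d\phi|\,|\tau|$ with Cauchy--Schwarz produces $\int_M\eta^2|\tau|^2\le\tfrac{C''}{R^2}\int_M|d\phi|^2=\tfrac{C''}{R^2}\,2E(\phi)$; finite energy and $R\to\infty$ then give $\int_M|\tau|^2=0$, so $\tau\equiv 0$ and $\phi$ is harmonic.

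The genuinely delicate points are, I expect, twofold: (a) getting the signs correct in the Weitzenb\"ock formula and in the curvature term so that non-positive curvature yields subharmonicity of $|\tau|^2$ rather than the reverse inequality, and (b) the clean derivation of $\operatorname{div}\omega^\sharp=|\tau|^2$, which is the device that converts the finite-energy hypothesis into harmonicity in the finite-volume regime left open by (2). The limiting arguments themselves are routine once the cut-off functions are in place, and completeness is used solely to guarantee that such $\eta$ with $|\nabla\eta|=O(1/R)$ exist globally.
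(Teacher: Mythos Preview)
Your argument is correct and is essentially the original strategy of Nakauchi--Urakawa--Gudmundsson: Bochner/Weitzenb\"ock plus the curvature sign to obtain subharmonicity of $|\tau|^2$, a cut-off estimate to force $\nabla\tau=0$, then either the infinite-volume observation or the divergence identity $\operatorname{div}\omega^\sharp=|\tau|^2$ together with a second cut-off argument. The computations you sketch all go through with the sign conventions you state.

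Note, however, that the present paper does not itself prove this theorem; it merely quotes it from \cite{NUG}. The paper's own proof is of the \emph{foliated} analogue (Theorem~3.10), and there the route is slightly different from yours. Instead of hand-built cut-off functions, the paper packages the two limiting steps into classical black boxes: for the parallelism/constancy of $|\tilde\tau_b(\phi)|$ it invokes Yau's $L^p$ Liouville theorem for subharmonic functions (their Lemma~3.11), and for the finite-energy case it appeals to Gaffney's Stokes theorem applied to the $1$-form $\omega(X)=\langle d_T\phi(X),\tilde\tau_b(\phi)\rangle$, after checking $\int_M|\omega|<\infty$ and $\delta_B\omega=-|\tilde\tau_b(\phi)|^2$. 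Your direct cut-off approach is more self-contained and makes the role of completeness explicit; the paper's approach is shorter on the page but hides the same analysis inside the cited theorems of Yau and Gaffney. Both routes rest on the same two ideas you correctly flagged as delicate: the sign in the Weitzenb\"ock identity, and the divergence computation for $\omega$ (which in either approach requires first knowing $\nabla\tau=0$).
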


Now, we study the generalized Chen's conjecture  for biharmonic maps on foliated Riemannian manifolds and extend Theorem 1.1 to foliations.  Let $(M,g,\mathcal F)$ and $(M',g',\mathcal F')$ be the foliated Riemannian manifolds. Let $\phi:M\to M'$ be a smooth foliated map, that is, map preserving the leaves.  Then $\phi$ is said to be {\it $(\mathcal F,\mathcal F')$-harmonic map} \cite{DT} if  $\phi$ is a critical point of the transversal energy $E_B(\phi)$, which is given by
\begin{align*}
E_B(\phi) =\frac12\int_M |d_T\phi|^2\mu_M,
\end{align*}
where $d_T\phi =d\phi|_Q$ is the differential map of $\phi$ restricted to the normal bundle $Q$ of $\mathcal F$. From the first variational formula for the transversal energy functional \cite{JJ1}, it is trivial that $(\mathcal F,\mathcal F')$-harmonic map is a solution of $\tilde\tau_b(\phi):= \tau_b(\phi) +d_T\phi(\kappa_B)=0$, where $\tau_b(\phi)=tr_Q (\nabla_{tr}d_T\phi)$ is the transversal tension field and $\kappa_B$ is the basic part of the mean curvature form $\kappa$ of $\mathcal F$. 
 
The map $\phi$ is said to be {\it $(\mathcal F,\mathcal F')$-biharmonic map} if $\phi$ is a critical point of the transversal bienergy functional $\tilde E_{B,2} (\phi)$, where
\begin{align*}
\tilde E_{B,2}(\phi) =\frac12\int_M |\tilde \tau_b(\phi)|^2 \mu_M.
\end{align*}
By the first variation formula for the transversal bienergy functional $\tilde E_{B,2}(\phi)$ (Theorem 3.7), we know that $(\mathcal F,\mathcal F')$-harmonic map is always $(\mathcal F,\mathcal F')$-biharmonic. But the converse is not true. So we prove the generalized Chen's conjecture for $(\mathcal F,\mathcal F')$-biharmonic map. That is, we prove the following theorem
\begin{thm} (cf. Theorem 3.10)  Let $(M,g,\mathcal F)$ be a foliated Riemannian manifold and let $(M',g',\mathcal F')$ be of non-positive transversal sectional curvature $K^{Q'}$, that is, $K^{Q'}\leq 0$. Let $\phi:M\to M'$ be a $(\mathcal F,\mathcal F')$-biharmonic map. Then

 (1) if $M$ is closed, then $\phi$ is automatically $(\mathcal F,\mathcal F')$-harmonic;

(2) if $M$ is complete with $Vol(M)=\infty$ and $\tilde E_{B,2}(\phi)<\infty$, then  $\phi$ is $(\mathcal F,\mathcal F')$-harmonic.

(3) If $M$ is complete with $E_B(\phi)<\infty$ and $\tilde E_{B,2}(\phi)<\infty$, then $\phi$ is $(\mathcal F,\mathcal F')$ -harmonic.
\end{thm}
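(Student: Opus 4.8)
The plan is to mimic the Nakauchi--Urakawa--Gudmundsson strategy (the proof of Theorem 1.1) in the transversal/foliated setting, replacing the ordinary Laplacian and tension field by their basic/transversal counterparts. The starting point is the characterization of $(\mathcal F,\mathcal F')$-biharmonicity coming from the first variation of $\tilde E_{B,2}$ (Theorem 3.7): $\phi$ is $(\mathcal F,\mathcal F')$-biharmonic if and only if the transversal bitension field vanishes, which should take the form
\begin{equation*}
\tilde\tau_{b,2}(\phi)=-\Delta_{tr}\tilde\tau_b(\phi)-\sum_a R^{Q'}\bigl(\tilde\tau_b(\phi),d_T\phi(E_a)\bigr)d_T\phi(E_a)=0,
\end{equation*}
where $\{E_a\}$ is a local orthonormal frame of $Q$, $\Delta_{tr}$ is the transversal (rough) Laplacian acting on sections of $\phi^{-1}TM'$, and $R^{Q'}$ is the transversal curvature of $\mathcal F'$. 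The key analytic device is a Weitzenb\"ock/Bochner-type identity for the basic function $|\tilde\tau_b(\phi)|^2$: computing $\Delta_B|\tilde\tau_b(\phi)|^2$ (the basic Laplacian) and pairing the biharmonic equation against $\tilde\tau_b(\phi)$ gives, schematically,
\begin{equation*}
\tfrac12\Delta_B|\tilde\tau_b(\phi)|^2=-|\nabla_{tr}\tilde\tau_b(\phi)|^2-\sum_a\langle R^{Q'}(\tilde\tau_b(\phi),d_T\phi(E_a))d_T\phi(E_a),\tilde\tau_b(\phi)\rangle.
\end{equation*}
Here the curvature hypothesis enters: since $K^{Q'}\le 0$, each summand on the right is $\le 0$, so the whole right-hand side is $\le 0$ and $|\tilde\tau_b(\phi)|^2$ is (transversally) subharmonic.

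With subharmonicity in hand, the three cases are dispatched by choosing the right integration-by-parts/vanishing argument, using throughout that for basic forms one integrates against the twisted differential $d_B=d-\kappa_B\wedge$ so that $\int_M \Delta_B f\,\mu_M$ behaves correctly modulo the mean-curvature term. For case (1), $M$ closed: integrating the Bochner identity over $M$ (using that for a basic function the integral of $\Delta_B f$ against the volume vanishes, or equivalently applying the transversal divergence theorem with the $\kappa_B$-correction) forces $\int_M|\nabla_{tr}\tilde\tau_b(\phi)|^2\mu_M\le 0$, hence $\nabla_{tr}\tilde\tau_b(\phi)=0$; then $|\tilde\tau_b(\phi)|$ is constant and a second, simpler integration of $\langle\tilde\tau_b(\phi),\cdots\rangle$ (or directly $E_B(\phi)<\infty$ on a closed manifold) yields $\tilde\tau_b(\phi)=0$, i.e. $(\mathcal F,\mathcal F')$-harmonicity. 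For case (2), $\mathrm{Vol}(M)=\infty$ with finite transversal bienergy: I would invoke a Yau/Karp-type $L^2$-Liouville theorem for (transversally) subharmonic functions, namely that a subharmonic $L^2$ function on a complete manifold of infinite volume must be constant, applied to $f=|\tilde\tau_b(\phi)|^2$ (whose $L^1$-norm is $2\tilde E_{B,2}(\phi)<\infty$); the infinite volume then forces that constant to be zero. For case (3), finite transversal energy and finite bienergy: the tool is a cutoff-function argument — multiply the Bochner inequality by $\eta^2$ for a Lipschitz cutoff $\eta$ supported on geodesic balls of radius $R$, integrate, use Cauchy--Schwarz to absorb the gradient-of-cutoff terms, and let $R\to\infty$; the finiteness of $E_B(\phi)$ and $\tilde E_{B,2}(\phi)$ makes the error terms vanish, again giving $\nabla_{tr}\tilde\tau_b(\phi)=0$ and then $\tilde\tau_b(\phi)=0$.

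The main obstacle I anticipate is not the curvature sign manipulation (that is immediate from $K^{Q'}\le 0$) but rather the \emph{foliated bookkeeping}: one must work consistently with basic sections, the transversal Levi-Civita connection $\nabla_{tr}$, and the basic Laplacian $\Delta_B$, and every integration by parts carries the mean-curvature correction through the basic codifferential $\delta_B=\delta+\iota_{\kappa_B}$. Establishing the Bochner formula in the transversal setting — and in particular checking that the transversal divergence theorem applies so that the boundary/mean-curvature terms either vanish (closed case) or are controlled (complete cases) — is where the real care is needed; one typically needs $\kappa_B$ to be basic-harmonic or the foliation to be suitably tense so that $\int_M\Delta_B f\,\mu_M=0$ for basic $f$. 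For cases (2) and (3) the additional subtlety is transplanting the Liouville and cutoff arguments, which are classically stated for the ordinary Laplacian on a complete manifold, to the basic Laplacian along the leaf space; this requires that the completeness of $M$ transfers to a usable completeness/comparison statement for the transversal geometry, so that geodesic balls and the cutoff estimates make sense at the level of basic functions.
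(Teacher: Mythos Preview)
Your overall strategy---biharmonic equation $\Rightarrow$ Weitzenb\"ock/Bochner identity $\Rightarrow$ subharmonicity $\Rightarrow$ case-by-case Liouville arguments---is exactly the paper's, and case~(1) is essentially right. But two points need correction.

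\textbf{Case (2): wrong function for Yau.} You propose applying a Yau-type Liouville theorem to $f=|\tilde\tau_b(\phi)|^2$, noting its $L^1$-norm is $2\tilde E_{B,2}(\phi)<\infty$. Yau's theorem requires $f\in L^p$ for some $p>1$; the $p=1$ case fails in general. The paper fixes this by applying Kato's inequality $|\nabla_{tr}\tilde\tau_b(\phi)|\ge |d_B|\tilde\tau_b(\phi)||$ to the Bochner identity to deduce that $|\tilde\tau_b(\phi)|$ (not its square) is basic-subharmonic; then $\int_M|\tilde\tau_b(\phi)|^2<\infty$ is an honest $L^2$ bound and Yau applies. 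You never invoke Kato, so as written your case~(2) has a gap.

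\textbf{Case (3): cutoff gives parallel, not vanishing.} Your cutoff argument on the Bochner inequality uses only $\tilde E_{B,2}(\phi)<\infty$ and yields $\nabla_{tr}\tilde\tau_b(\phi)=0$, hence $|\tilde\tau_b(\phi)|$ constant. But case~(3) does not assume infinite volume, so a nonzero constant is not yet excluded; you still owe an argument for $\tilde\tau_b(\phi)=0$, and this is where $E_B(\phi)<\infty$ must enter. The paper takes a different and more direct route: it defines the basic $1$-form $\omega(X)=\langle d_T\phi(X),\tilde\tau_b(\phi)\rangle$, uses Cauchy--Schwarz and the two finiteness hypotheses to get $\omega\in L^1$, computes $\delta_B\omega=-|\tilde\tau_b(\phi)|^2$ (this uses $\nabla_{tr}\tilde\tau_b(\phi)=0$), and then applies Gaffney's theorem to conclude $\int_M|\tilde\tau_b(\phi)|^2=0$. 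Your outline would ultimately need the same Gaffney step (or an equivalent) to close.

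\textbf{On the ``foliated bookkeeping'' you flag as the main obstacle.} The paper dissolves this worry with one observation: for basic $1$-forms $\delta_B\omega=\delta\omega$, hence $\Delta_B f=\Delta f$ for basic functions. Thus the classical Yau and Gaffney theorems on the complete Riemannian manifold $(M,g)$ apply verbatim to basic data; no separate ``transversal Liouville'' machinery is needed. This is the missing ingredient that makes your anticipated difficulty disappear.
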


\begin{rem}  
On foliations, there is another kinds of harmonic map, called {\it transversally harmonic map}, which is a solution of the Eular-Lagrange equation $\tau_b(\phi)=0$ \cite{KW1}.  Also,  the {\it transversally biharmonic map} is defined \cite{CW}, which is not a critical point of the bienergy $\tilde E_{B,2}(\phi)$. Two definitions for harmonic maps are equivalent when the foliation is minimal.  The generalized Chen's conjectures for transversally biharmonic map have been proved  in \cite{JU3,JJ2}.
\end{rem}  
\section{Preliminaries}

Let $(M,g,\mathcal F)$ be a foliated Riemannian
manifold of dimension $n$ with a foliation $\mathcal F$ of codimension $q (=n-p)$ and a bundle-like metric $g$ with respect to $\mathcal F$ \cite{Molino,Tond}.  Let $Q=TM/T\mathcal F$ be the normal  bundle of $\mathcal F$, where  $T\mathcal F$ is the tangent bundle of $\mathcal F$. Let $g_Q$ be the induced metric by $g$ on $Q$, that is, $g_Q = \sigma^* (g|_{T\mathcal F^\perp})$, where $\sigma:Q\to T\mathcal F^\perp$ is the cnonical bundle isomorphism.  Then $g_Q$ is the holonomy invariant metric on $Q$, meaning that  $L_Xg_Q=0$ for $X\in T\mathcal F$, where
$L_X$ is the transverse Lie derivative with respect to $X$.  Let $\nabla^Q$  be the transverse Levi-Civita
connection on the normal bundle $Q$ \cite{Tond,Tond1} and $R^Q$  be the transversal curvature tensor  of $\nabla^Q\equiv\nabla$, which is  defined by $R^Q(X,Y)=[\nabla_X,\nabla_Y]-\nabla_{[X,Y]}$ for any $X,Y\in\Gamma TM$. Let $K^Q$ and ${\rm Ric}^Q $ be the transversal
sectional curvature and transversal Ricci operator with respect to $\nabla$, respectively.
Let $\Omega_B^r(\mathcal F)$ be the space of all {\it basic
$r$-forms}, i.e.,  $\omega\in\Omega_B^r(\mathcal F)$ if and only if
$i(X)\omega=0$ and $L_X\omega=0$ for any $X\in\Gamma T\mathcal F$, where $i(X)$ is the interior product. Then $\Omega^*(M)=\Omega_B^*(\mathcal F)\oplus \Omega_B^*(\mathcal F)^\perp$ \cite{Lop}.   It is well known that $\kappa_B$ is closed, i.e., $d\kappa_B=0$, where  $\kappa_B$ is the basic part of  the mean curvature form $\kappa$ \cite{Lop, PJ} .  
Let $\bar *:\Omega_B^r(\mathcal F)\to \Omega_B^{q-r}(\mathcal F)$ be the star operator  given by
\begin{align*}
\bar *\omega = (-1)^{(n-q)(q-r)} *(\omega\wedge\chi_{\mathcal F}),\quad \omega\in\Omega_B^r(\mathcal F),
\end{align*}
where $\chi_{\mathcal F}$ is the characteristic form of $\mathcal F$ and $*$ is the Hodge star operator associated to $g$.  Let $\langle\cdot,\cdot\rangle$ be the pointwise inner product on $\Omega_B^r(\mathcal F)$, which is given by
\begin{align*}
\langle\omega_1,\omega_2\rangle \nu = \omega_1\wedge\bar * \omega_2,
\end{align*}
where $\nu$ is the transversal volume form such that $*\nu =\chi_{\mathcal F}$. 
 Let $\delta_B :\Omega_B^r (\mathcal F)\to \Omega_B^{r-1}(\mathcal F)$ be the operator defined by
\begin{align*}
\delta_B\omega = (-1)^{q(r+1)+1} \bar * (d_B-\kappa_B \wedge) \bar *\omega,
\end{align*}
where $d_B = d|_{\Omega_B^*(\mathcal F)}$. It is well known  \cite{Park} that $\delta_B$ is the formal adjoint of $d_B$ with respect to the global inner product. That is,
\begin{align*}
\int_M \langle d\omega_1,\omega_2\rangle \mu_M =\int_M \langle \omega_1,\delta_B\omega_2\rangle\mu_M
\end{align*}
for any compactly supported basic forms $\omega_1$ and $\omega_2$, where $\mu_M=\nu\wedge\chi_{\mathcal F}$ is the volume element.

There exists a bundle-like metric  such that  the mean curvature form satisfies $\delta_B\kappa_B=0$ on compact manifolds \cite{DO,MMR,MA}.
 The  basic
Laplacian $\Delta_B$ acting on $\Omega_B^*(\mathcal F)$ is given by
\begin{equation*}
\Delta_B=d_B\delta_B+\delta_B d_B.
\end{equation*}
 Now we define the bundle map $A_Y:\Gamma Q\to \Gamma Q$ for any $Y\in TM$ by
\begin{align}\label{eq1-11}
A_Y s =L_Ys-\nabla_Ys,
\end{align}
where $L_Y s = \pi [Y,Y_s]$ for $\pi(Y_s)=s$. It is well-known \cite{Kamber2} that for any  infitesimal automorphism $Y$ (that is, $[Y,Z]\in \Gamma T\mathcal F$ for
all $Z\in \Gamma T\mathcal F$ \cite{Kamber2})
\begin{align*}
A_Y s = -\nabla_{Y_s}\pi(Y),
\end{align*}
where $\pi:TM\to Q$ is the natural projection and $Y_s$ is the vector field such that $\pi(Y_s)=s$. So $A_Y$ depends only on $\bar Y=\pi(Y)$ and is a linear operator.  Moreover, $A_Y$ extends in an obvious way to tensors of any type on $Q$  \cite{Kamber2}.
Then we
have the generalized Weitzenb\"ock formula on $\Omega_B^*(\mathcal F)$ \cite{JU2}: for any $\omega\in\Omega_B^r(\mathcal  F),$
\begin{align}\label{2-3}
  \Delta_B \omega = \nabla_{\rm tr}^*\nabla_{\rm tr}\omega +
  F(\omega)+A_{\kappa_B^\sharp}\omega,
\end{align}
where $F(\omega)=\sum_{a,b}\theta^a \wedge i(E_b)R^Q(E_b,
 E_a)\omega$ and 
 \begin{align}\label{2-4}
\nabla_{\rm tr}^*\nabla_{\rm tr}\omega =-\sum_a \nabla^2_{E_a,E_a}\omega
+\nabla_{\kappa_B^\sharp}\omega.
\end{align}
 The operator $\nabla_{\rm tr}^*\nabla_{\rm tr}$
is positive definite and formally self adjoint on the space of
basic forms \cite{JU2}. 
  If $\omega$ is a basic 1-form, then $F(\omega)^\sharp
 ={\rm Ric}^Q(\omega^\sharp)$.
 Now, we recall the transversal divergence theorem  on a foliated Riemannian
manifold for later use.
\begin{thm} \label{thm1-1} \cite{Yorozu}
Let $(M,g,\mathcal F)$ be a closed, oriented Riemannian manifold
with a transversally oriented foliation $\mathcal F$ and a
bundle-like metric $g$ with respect to $\mathcal F$. Then for a transversal infinitesimal automorphism $X$,
\begin{equation*}
\int_M \operatorname{div_\nabla}(\pi(X)) \mu_{M}
= \int_M g_Q(\pi(X),\kappa_B^\sharp)\mu_{M},
\end{equation*}
where $\operatorname{div_\nabla} s$
denotes the transversal divergence of $s$ with respect to the
connection $\nabla$.
\end{thm}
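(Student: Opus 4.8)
The plan is to reduce the identity to the adjointness of the basic codifferential $\delta_B$ against the constant function. First I would set $s=\pi(X)$; since $X$ is a transversal infinitesimal automorphism, $[X,Z]\in\Gamma T\mathcal F$ for all $Z\in\Gamma T\mathcal F$, so $s$ is a holonomy-invariant (basic) section of $Q$ and its metric dual $\omega=s^\flat$ is a basic $1$-form. The whole argument then takes place on $\Omega_B^*(\mathcal F)$, where the star operator $\bar *$, the codifferential $\delta_B$, and the adjointness formula recorded above are all available.

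The computational heart is the pointwise identity
\begin{align*}
\delta_B\omega = -\operatorname{div_\nabla}(s) + g_Q(s,\kappa_B^\sharp).
\end{align*}
To obtain it I would use the definition $\delta_B\omega=(-1)^{q(r+1)+1}\bar *(d_B-\kappa_B\wedge)\bar *\omega$ with $r=1$ and split it as $\delta_B=\delta_T+i(\kappa_B^\sharp)$, where $\delta_T=-\sum_a i(E_a)\nabla_{E_a}$ is the part arising from $\bar * d_B\bar *$ and $i(\kappa_B^\sharp)$ is the contribution of the $-\kappa_B\wedge$ term. Evaluating on $\omega$ in a local transverse orthonormal frame $\{E_a\}$, metric compatibility of $\nabla$ gives $\nabla_{E_a}\omega=(\nabla_{E_a}s)^\flat$, whence $\delta_T\omega=-\sum_a g_Q(\nabla_{E_a}s,E_a)=-\operatorname{div_\nabla}(s)$, while $i(\kappa_B^\sharp)\omega=g_Q(s,\kappa_B^\sharp)$; adding these yields the identity.

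With this in hand I would rearrange to $\operatorname{div_\nabla}(s)=-\delta_B\omega+g_Q(s,\kappa_B^\sharp)$ and integrate over the closed manifold $M$ against $\mu_M$. It then remains to see that $\int_M\delta_B\omega\,\mu_M=0$. This follows from the adjointness formula $\int_M\langle d\omega_1,\omega_2\rangle\mu_M=\int_M\langle\omega_1,\delta_B\omega_2\rangle\mu_M$ recorded above: taking $\omega_1=1$ (the constant basic $0$-form) and $\omega_2=\omega$ gives
\begin{align*}
\int_M\delta_B\omega\,\mu_M=\int_M\langle 1,\delta_B\omega\rangle\mu_M=\int_M\langle d_B1,\omega\rangle\mu_M=0,
\end{align*}
since $d_B1=0$. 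Combining this with the integrated pointwise identity gives the asserted equality.

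The main obstacle I anticipate is the pointwise identity, specifically the bookkeeping of the sign and of the mean-curvature correction coming from the $-\kappa_B\wedge$ term in the definition of $\delta_B$: one must verify that $\bar *(\kappa_B\wedge\bar *\omega)$ reproduces exactly $i(\kappa_B^\sharp)\omega$ with the sign conventions fixed by $\bar *$ and the transversal volume form $\nu$, and that the part $\bar * d_B\bar *$ contributes no further lower-order terms beyond $-\operatorname{div_\nabla}(s)$. Once $\delta_B=\delta_T+i(\kappa_B^\sharp)$ is confirmed on basic $1$-forms, the remainder is the routine integration-by-parts step above, for which closedness and transverse orientability of $M$ guarantee that the adjointness formula applies without boundary contributions.
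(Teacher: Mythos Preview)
The paper does not supply its own proof of this theorem: it is quoted as a known result with a citation to Yorozu--Tanemura and no argument is given in the text. Your proposal is correct and is in fact the standard route to this identity: once one knows that $\omega=\pi(X)^\flat$ is basic and that on basic $1$-forms $\delta_B\omega=-\operatorname{div}_\nabla(\pi(X))+g_Q(\pi(X),\kappa_B^\sharp)$, the result follows immediately from $\int_M\delta_B\omega\,\mu_M=\int_M\langle d_B 1,\omega\rangle\,\mu_M=0$ on a closed manifold. The only point worth tightening is the verification that $-\bar* d_B\bar*$ on basic $1$-forms coincides with $-\sum_a i(E_a)\nabla_{E_a}$ (i.e., contributes exactly $-\operatorname{div}_\nabla(s)$ with no extra lower-order term); this is the transverse analogue of the usual Hodge identity and holds because $d_B$ agrees on basic forms with the alternation of $\nabla$, but it is the step where sign and frame conventions must be checked carefully, as you note.
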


\section{$(\mathcal F,\mathcal F')$-harmonic and biharmonic maps on foliations}
Let $\phi :(M,g,\mathcal F) \rightarrow (M', g',\mathcal F')$ be a smooth foliated map, i.e., $d\phi(T\mathcal F)\subset T\mathcal F'$, and $\Omega_B^r(E)=\Omega_B^r(\mathcal F)\otimes E$ be the space of $E$-valued basic $r$-forms, where $E=\phi^{-1}Q'$ is the pull-back bundle on $M$.
We define $d_T\phi:Q \to Q'$ by
\begin{align*}
d_T\phi := \pi' \circ d \phi \circ \sigma.
\end{align*}
Trivially, $d_T\phi\in \Omega_B^1(E)$.   Let $\nabla^\phi$
and $\tilde \nabla$ be the connections on $E$ and
$Q^*\otimes E$, respectively. Then a foliated map $\phi:(M, g,\mathcal F)\to (M', g',\mathcal F')$ is called {\it transversally totally geodesic} if it satisfies
\begin{align}
\tilde\nabla_{\rm tr}d_T\phi=0,
\end{align}
where $(\tilde\nabla_{\rm tr}d_T\phi)(X,Y)=(\tilde\nabla_X d_T\phi)(Y)$ for any $X,Y\in \Gamma Q$. Note that if $\phi:(M,g,\mathcal F)\to (M',g',\mathcal F')$ is transversally totally geodesic with $d\phi(Q)\subset Q'$, then, for any transversal geodesic $\gamma$ in $M$, $\phi\circ\gamma$ is also transversal geodesic.
From now on, we use $\nabla$ instead of all induced connections if we have no confusion.
We define $d_\nabla : \Omega_B^r(E)\to \Omega_B^{r+1}(E)$ by
\begin{align}
d_\nabla(\omega\otimes s)=d_B\omega\otimes s+(-1)^r\omega\wedge\nabla s
\end{align}
for any $s\in \Gamma E$ and $\omega\in\Omega_B^r(\mathcal F)$.
Let $\delta_\nabla$ be a formal adjoint of $d_\nabla$ with respect to the inner product.  
Note that
\begin{align}\label{4-6}
d_\nabla (d_T\phi)=0,\quad\delta_\nabla d_T\phi=-\tau_b (\phi) +d_T\phi(\kappa_B^\sharp),
\end{align}
where $\tau_{b}(\phi)$ is the  {\it transversal tension field}  of $\phi$ defined by
\begin{align}\label{eq3-3}
\tau_{b}(\phi):={\rm tr}_{Q}(\nabla_{\rm tr} d_T\phi).
\end{align}
The Laplacian $\Delta$ on $\Omega_B^*(E)$ is defined by
\begin{align*}
\Delta =d_\nabla \delta_\nabla +\delta_\nabla d_\nabla.
\end{align*}
Moreover, the operator $A_X$ is extended to $\Omega_B^r(E)$ as follows:
\begin{align*}
A_X\Psi&=L_X \Psi -\nabla_X\Psi,
\end{align*}
where $L_X=d_\nabla i(X) +i(X)d_\nabla$ for any $X\in \Gamma TM$ and $i(X)(\omega\otimes s)=i(X)\omega\otimes s$. Hence $\Psi \in\Omega_B^*(E)$ if and only if $i(X)\Psi=0$ and $L_X\Psi=0$ for all $ X\in \Gamma T\mathcal F$.
Then the generalized Weitzenb\"ock type formula (\ref{2-3}) is extended to $\Omega_B^*(E)$ as follows \cite{JJ1}: for any $\Psi\in\Omega_B^r(E)$,
\begin{align}\label{eq4-6}
\Delta \Psi = \nabla_{\rm tr}^*\nabla_{\rm tr} \Psi
 + A_{\kappa_{B}^\sharp} \Psi + F(\Psi), 
\end{align}
where $ \nabla_{\rm tr}^*\nabla_{\rm tr}$ is the operator induced from (\ref{2-4}) and $F(\Psi)=\sum_{a,b=1}^{q}\theta^a\wedge i(E_b) R(E_b,E_a)\Psi$.
Moreover, we have that for any $ \Psi\in\Omega_B^r(E)$,
\begin{align}\label{weitzenbock}
\frac12\Delta_B|\Psi |^{2}
=\langle\Delta \Psi, \Psi\rangle -|\nabla_{\rm tr} \Psi|^2-\langle A_{\kappa_{B}^\sharp}\Psi, \Psi\rangle -\langle F(\Psi),\Psi\rangle.
\end{align}



\subsection{ $(\mathcal F,\mathcal F')$-harmonic maps}
 About this section, see \cite{DT}.  Let $\Omega$ be a compact domain of $M$. Then the {\it transversal energy functional}  of $\phi$ on $\Omega$ is defined by
\begin{align}\label{eq2-4}
E_{B}(\phi;\Omega)={1\over 2}\int_{\Omega} | d_T \phi|^2\mu_{M}.
\end{align}
Then Dragomir and Tommasoli \cite{DT} defined {\it $(\mathcal F,\mathcal F')$-harmonic} if $\phi$ is a critical point of the transversal energy functional $E_{B}(\phi)$. Also, we obtain the first variational formula \cite{DT,JJ1} 
\begin{align}\label{3-12}
{d\over dt}E_{B}(\phi_t;\Omega)\Big|_{t=0}=-\int_{\Omega} \langle \tilde\tau_{b}(\phi),V\rangle \mu_{M},
\end{align}
where $V={d\phi_t\over dt}|_{t=0}$ is the normal variation vector field of a foliated variation $\{\phi_t\}$ of $\phi$ and
\begin{align}
\tilde\tau_b(\phi) :={\tau}_{b}(\phi)-d_T\phi(\kappa_B^\sharp).
\end{align}
From (\ref{3-12}), we have the following \cite{DT}.
\begin{prop}
 A foliated map  $\phi$ is $(\mathcal F,\mathcal F')$-harmonic map if and only if  $\tilde\tau_b(\phi)=0$.
\end{prop}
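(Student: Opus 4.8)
The plan is to read the proposition as the Euler--Lagrange characterization of critical points and to extract it directly from the first variational formula (\ref{3-12}). Recall that, by definition, $\phi$ is $(\mathcal F,\mathcal F')$-harmonic precisely when $\frac{d}{dt}E_B(\phi_t;\Omega)\big|_{t=0}=0$ for every compact domain $\Omega$ and every foliated variation $\{\phi_t\}$ of $\phi$, with $V=\frac{d\phi_t}{dt}\big|_{t=0}\in\Gamma E$ the associated normal variation vector field. Substituting (\ref{3-12}), being a critical point is equivalent to
\begin{align*}
\int_\Omega \langle \tilde\tau_b(\phi),V\rangle\,\mu_M=0
\end{align*}
for all admissible $V$ and all $\Omega$.

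The sufficiency direction is then immediate. If $\tilde\tau_b(\phi)=0$, the right-hand side of (\ref{3-12}) vanishes identically, so every foliated variation has vanishing first derivative and $\phi$ is a critical point of $E_B$, i.e.\ $(\mathcal F,\mathcal F')$-harmonic.

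For the necessity direction I would invoke the fundamental lemma of the calculus of variations adapted to the transversal setting. Assuming the integral above vanishes for all admissible $V$, the goal is to conclude $\tilde\tau_b(\phi)=0$ pointwise. The natural test field is $V=f\,\tilde\tau_b(\phi)$, where $f$ is a nonnegative, compactly supported basic function that is strictly positive wherever $\tilde\tau_b(\phi)\neq 0$; this choice yields
\begin{align*}
\int_M f\,|\tilde\tau_b(\phi)|^2\,\mu_M=0,
\end{align*}
which forces $\tilde\tau_b(\phi)=0$ on all of $M$. The step requiring care is that $V$ must genuinely arise as the normal variation field of a leaf-preserving variation $\{\phi_t\}$, so one must check that $f\,\tilde\tau_b(\phi)$ is an admissible basic section of $E=\phi^{-1}Q'$ and that, for instance, the transversal exponential deformation $t\mapsto\exp_{\phi}(tV)$ defines a foliated variation.

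The main obstacle is precisely this admissibility check. Unlike the unfoliated case, not every section of $E$ is the variation field of a foliated (leaf-preserving) deformation, so one must verify that $\tilde\tau_b(\phi)$, and hence $f\,\tilde\tau_b(\phi)$, is basic and that the transversal exponential construction respects the foliated structure of both $\mathcal F$ and $\mathcal F'$. Once this compatibility is secured, the localization argument with the basic bump function $f$ finishes the proof.
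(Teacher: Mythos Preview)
Your proposal is correct and follows exactly the route the paper indicates: the paper does not write out a proof but simply states that the proposition follows ``From (\ref{3-12})'' with a citation to \cite{DT}, i.e., it is the Euler--Lagrange characterization read off from the first variation formula. Your argument spells out the fundamental-lemma step (testing against $V=f\,\tilde\tau_b(\phi)$ and checking that such $V$ arises from a foliated variation) that the paper leaves implicit in the reference.
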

\begin{rem}  (1) If $\phi:M\to \mathbb R$ is a basic function, then $\tilde\tau_b(\phi) = -\Delta_B\phi$. So $(\mathcal F,\mathcal F')$-harmonic map is a generalization of a basic harmonic function.

(2) On foliated manifold,  there is another kinds of harmonic map, {\it transvesally harmonic map}, which is a solution of the Euler-Lagrange equation $\tau_b(\phi)=0$ by Konderak and Wolak \cite{KW1}. But the transversally harmonic map is not a critical point of the energy functional $E_B(\phi)$.  Two definitions are equivalent when the foliation is minimal.
\end{rem}
Now, we define the {\it transversal Jacobi operator} $J_\phi^T :\Gamma\phi^{-1}Q'\to\Gamma \phi^{-1}Q'$ by
\begin{align}
J^T_\phi (V) = \nabla_{tr}^*\nabla_{tr}V - {\rm tr}_Q R^{Q'}(V,d_T\phi)d_T\phi.
\end{align}
Then $J^T_\phi$ is a formally self-adjoint operator. That is, for any $V,W\in\Gamma \phi^{-1}Q'$,
\begin{align}\label{3-16}
\int_M \langle J^T_\phi(V),W\rangle\mu_M = \int_M \langle V, J^T_\phi(W)\rangle\mu_M.
\end{align}
Also, we have the second variation formula for the transversal energy functional $E_B(\phi)$.
\begin{thm} ( \cite{DT}, The second variation formula)  Let $\phi:(M,g,\mathcal F)\to (M',g',\mathcal F')$ be a $(\mathcal F,\mathcal F')$-harmonic map and let $\{\phi_{s,t}\}$ be  the foliated variation of $\phi$ supported in a compact domain $\Omega$.  Then
\begin{align}\label{3-18}
{\partial^2\over\partial s\partial t} E_B (\phi_{s,t};\Omega) \Big|_{(s,t)=(0,0)} =\int_\Omega \langle J^T_\phi(V),W\rangle \mu_M,
\end{align}
where $V$ and $W$ are the variation vector fields of $\phi_{s,t}$.
\end{thm}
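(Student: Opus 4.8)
The plan is to obtain the second variation by differentiating the first variation formula \eqref{3-12} a second time and then invoking the self-adjointness \eqref{3-16} of $J^T_\phi$. I regard the foliated variation as a single foliated map $\Phi:M\times(-\epsilon,\epsilon)^2\to M'$, $\Phi(\cdot,s,t)=\phi_{s,t}$, foliated in the $M$-directions (extend $\mathcal F$ trivially, so that $\partial_s,\partial_t$ lie in the normal bundle of the product foliation), and write $V=d_T\Phi(\partial_t)|_{(0,0)}$ and $W=d_T\Phi(\partial_s)|_{(0,0)}$ for the two variation fields, both sections of $\Phi^{-1}Q'$. Since the derivation of \eqref{3-12} uses no harmonicity, for each fixed $s$ it gives
\[
\frac{\partial}{\partial t}E_B(\phi_{s,t};\Omega)=-\int_\Omega\langle\tilde\tau_b(\phi_{s,t}),d_T\Phi(\partial_t)\rangle\,\mu_M .
\]
Differentiating in $s$ and evaluating at $(s,t)=(0,0)$, the Leibniz rule for the induced connection yields two terms; the one containing $\tilde\tau_b(\phi)$ drops out because $\phi=\phi_{0,0}$ is $(\mathcal F,\mathcal F')$-harmonic, i.e.\ $\tilde\tau_b(\phi)=0$. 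Hence
\[
\frac{\partial^2}{\partial s\partial t}E_B(\phi_{s,t};\Omega)\Big|_{(0,0)}=-\int_\Omega\big\langle\nabla_{\partial_s}\tilde\tau_b(\phi_{s,t})\big|_{(0,0)},V\big\rangle\,\mu_M ,
\]
so the whole problem reduces to identifying $\nabla_{\partial_s}\tilde\tau_b(\Phi)|_{(0,0)}$.

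For this I would fix a point and a transverse orthonormal basic frame $\{E_a\}_{a=1}^q$ of $Q$ that is normal there ($\nabla_{E_a}E_b=0$ at the point), and write $\tilde\tau_b(\Phi)=\sum_a(\nabla_{E_a}d_T\Phi)(E_a)-d_T\Phi(\kappa_B^\sharp)$. Two structural facts drive the computation: (i) since $\nabla$ on $Q'$ is torsion-free and $[\partial_s,E_a]=0$, the transverse Hessian of $\Phi$ is symmetric, so $\nabla_{\partial_s}(d_T\Phi(E_a))=\nabla_{E_a}(d_T\Phi(\partial_s))$; and (ii) the commutation rule $\nabla_{\partial_s}\nabla_{E_a}X-\nabla_{E_a}\nabla_{\partial_s}X=R^{Q'}(d_T\Phi(\partial_s),d_T\Phi(E_a))X$ for $X\in\Gamma(\Phi^{-1}Q')$.

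Applying (i) and then (ii) to $\nabla_{\partial_s}\sum_a\nabla_{E_a}(d_T\Phi(E_a))$ turns it, at $(0,0)$, into $\sum_a\nabla_{E_a}\nabla_{E_a}W+\sum_aR^{Q'}(W,d_T\phi(E_a))d_T\phi(E_a)$. Rewriting $\sum_a\nabla_{E_a}\nabla_{E_a}W$ via the definition \eqref{2-4} of $\nabla_{\rm tr}^{*}\nabla_{\rm tr}$ gives $-\nabla_{\rm tr}^{*}\nabla_{\rm tr}W+\nabla_{\kappa_B^\sharp}W$, while differentiating the mean-curvature term, again by (i), gives $\nabla_{\partial_s}(d_T\Phi(\kappa_B^\sharp))|_{(0,0)}=\nabla_{\kappa_B^\sharp}W$. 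The two $\nabla_{\kappa_B^\sharp}W$ contributions cancel, leaving exactly
\[
\nabla_{\partial_s}\tilde\tau_b(\Phi)\big|_{(0,0)}=-\nabla_{\rm tr}^{*}\nabla_{\rm tr}W+{\rm tr}_Q R^{Q'}(W,d_T\phi)d_T\phi=-J^T_\phi(W).
\]
Substituting back gives $\frac{\partial^2}{\partial s\partial t}E_B|_{(0,0)}=\int_\Omega\langle J^T_\phi(W),V\rangle\mu_M$, and the self-adjointness \eqref{3-16} (whose boundary terms vanish because the variation is supported in $\Omega$) converts this into $\int_\Omega\langle J^T_\phi(V),W\rangle\mu_M$, as claimed. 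Alternatively, applying \eqref{3-12} in the $s$-variable first and differentiating in $t$ produces $J^T_\phi(V)$ directly, bypassing \eqref{3-16}.

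The main obstacle is the foliated bookkeeping in the key computation: one must justify the symmetry (i) and the curvature commutation (ii) \emph{on the normal bundle} $Q'$ for the product foliation on $M\times(-\epsilon,\epsilon)^2$, since $d_T\Phi$ records only the transverse part $\pi'\circ d\Phi\circ\sigma$ and all connections involved are the transverse ones; and one must track the mean-curvature contribution carefully to secure the exact cancellation that collapses $\sum_a\nabla_{E_a}\nabla_{E_a}$ together with the $\kappa_B$-correction into $\nabla_{\rm tr}^{*}\nabla_{\rm tr}$. Once these transverse analogues of the classical harmonic-map identities are established, the remainder is the expected routine calculation.
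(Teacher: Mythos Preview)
Your proposal is correct and follows essentially the same route as the paper: differentiate the first variation \eqref{3-12}, use $\tilde\tau_b(\phi)=0$ to kill one term, and identify $\nabla_{\partial_s}\tilde\tau_b(\Phi)|_{(0,0)}$ with $-J^T_\phi$ applied to the corresponding variation field via the symmetry of $\tilde\nabla_{\rm tr}d_T\Phi$ and the curvature commutation rule (exactly the paper's \eqref{3-17}--\eqref{3-21}). The only cosmetic difference is that the paper sets $V=\partial\phi_{s,t}/\partial s|_{(0,0)}$ and $W=\partial\phi_{s,t}/\partial t|_{(0,0)}$, so it lands directly on $\int_\Omega\langle J^T_\phi(V),W\rangle\mu_M$ without invoking \eqref{3-16}; your labeling is swapped, hence the extra self-adjointness step, which you already note can be bypassed by running the argument in the other order.
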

\begin{proof}
Let $V={\partial\phi_{s,t}\over \partial s}\Big|_{(s,t)=(0,0)}$ and $W={\partial\phi_{s,t}\over \partial t}\Big|_{(s,t)=(0,0)}$ be the variation vector fields of $\phi_{s,t}$.   Let $\Phi:M \times (-\epsilon,\epsilon)\times(-\epsilon,\epsilon)\to M'$ be a smooth map, which is defined by $\Phi(x,s,t)=\phi_{s,t}(x)$. Let $\nabla^\Phi$ be the pull-back connection on $\Phi^{-1}Q'$. It is trivial that
$[X,{\partial\over\partial t}]=[X,{\partial\over\partial s}]=0$ for any vector field $X\in TM$.   From  (\ref{3-12}), we have
\begin{align*}
{\partial^2\over\partial s\partial t} E_B(\phi_{s,t};\Omega) = -\int_\Omega \langle {\partial^2\phi_{s,t}\over\partial s\partial t},\tilde\tau_b(\phi_{s,t})\rangle\mu_M -\int_\Omega  \langle {\partial \phi_{s,t}\over\partial t},\nabla^\Phi_{\partial\over\partial s}\tilde\tau_b(\phi_{s,t})\rangle\mu_M.
\end{align*}
At $(s,t)=(0,0)$, the first term vanishes because of  $\tilde\tau_b(\phi)=0$.  So
\begin{align}\label{3-19}
{\partial^2\over\partial s\partial t} E_B(\phi_{s,t};\Omega)\Big|_{(s,t)=(0,0)} =  -\int_\Omega  \langle W,\nabla^\Phi_{\partial\over\partial s}\tilde\tau_b(\phi_{s,t})\Big|_{(s,t)=(0,0)}\rangle\mu_M.
\end{align}
At $x\in M$,  by a straight calculation, we have
\begin{align}\label{3-17}
\nabla^\Phi_{\partial\over\partial s} \tilde\tau_b (\phi_{s,t}) =\sum_a \nabla^\Phi_{E_a}\nabla^\Phi_{E_a} d\Phi({\partial\over\partial s}) -\nabla^\Phi_{\kappa_B^\sharp} d\Phi({\partial\over\partial s}) +\sum_a R^\Phi({d\over dt},E_a)d\Phi(E_a).
\end{align}
Hence at $(s,t)=(0,0)$, we have
\begin{align*}
\nabla^\Phi_{\partial \over\partial s} \tilde\tau_b(\phi_{s,t}) \Big|_{(s,t)=(0,0)} = -\nabla_{tr}^*\nabla_{tr} V + {\rm tr}_Q R^{Q'}(V,d_T\phi)d_T\phi.
\end{align*}
That is, we have
\begin{align}\label{3-21}
\nabla^\Phi _{\partial\over\partial s} \tilde\tau_b(\phi_{s,t})\Big|_{(s,t)=(0,0)} = -J^T_\phi(V).
\end{align}
Hence  the proof of (\ref{3-18}) follows from (\ref{3-19}) and (\ref{3-21}).
\end{proof}
Now, we define  the {\it basic Hessian} $Hess_\phi^T$ of $\phi$ by
\begin{align}
Hess^T_\phi (V,W) = \int_M \langle J^T_\phi(V),W\rangle \mu_M.
\end{align}
Then $Hess^T_\phi(V,W) = Hess^T_\phi (W,V)$ for any $V,W\in \phi^{-1}Q'$.  If  $Hess^T_\phi$  is positive semi-definite, that is, $Hess^T_\phi(V,V)\geq 0$ for any normal vecor field $V$ along $\phi$, then $\phi$ is said to be {\it weakly stable}.  Hence we have the following corollary.
\begin{cor} (\cite{DT}, Stability) Let $M$ be a closed Riemannian manifold and $M'$ be of non-positive transversal sectional curvature. Then any $(\mathcal F,\mathcal F')$-harmonic map $\phi:(M,\mathcal F)\to (M',\mathcal F')$ is weakly stable.
\end{cor}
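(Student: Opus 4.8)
The plan is to prove weak stability directly from the definitions, by showing that $Hess^T_\phi(V,V)\ge 0$ for every basic normal vector field $V$ along $\phi$. First I would substitute the transversal Jacobi operator into the Hessian and split it into a rough-Laplacian part and a curvature part,
\begin{align*}
Hess^T_\phi(V,V)=\int_M\langle\nabla_{\rm tr}^*\nabla_{\rm tr}V,V\rangle\mu_M-\int_M\langle {\rm tr}_Q R^{Q'}(V,d_T\phi)d_T\phi,V\rangle\mu_M,
\end{align*}
and then establish that each of the two integrals is nonnegative. This is the standard Bochner-type dichotomy, adapted to the transversal setting.

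For the first integral, the point is that $\nabla_{\rm tr}^*\nabla_{\rm tr}$ is the formally self-adjoint, positive rough Laplacian on basic sections, as recorded after (\ref{2-4}). Because $M$ is closed, integration by parts produces no boundary terms, so the adjoint relation gives $\int_M\langle\nabla_{\rm tr}^*\nabla_{\rm tr}V,V\rangle\mu_M=\int_M|\nabla_{\rm tr}V|^2\mu_M\ge 0$. I would note that the mean-curvature term $\nabla_{\kappa_B^\sharp}$ incorporated into $\nabla_{\rm tr}^*\nabla_{\rm tr}$ in (\ref{2-4}) is precisely what makes this identity valid in the foliated case, so the closedness of $M$ is the only hypothesis actually used here.

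For the second integral, I would rewrite the curvature term through the transversal sectional curvature. Expanding the trace over a local transverse orthonormal frame $\{E_a\}$ yields $\langle {\rm tr}_Q R^{Q'}(V,d_T\phi)d_T\phi,V\rangle=\sum_a\langle R^{Q'}(V,d_T\phi(E_a))d_T\phi(E_a),V\rangle$, and by the symmetries of $R^{Q'}$ each summand equals $K^{Q'}(V,d_T\phi(E_a))$ multiplied by the nonnegative Gram factor $|V|^2|d_T\phi(E_a)|^2-\langle V,d_T\phi(E_a)\rangle^2$. The assumption $K^{Q'}\le 0$ then forces the entire curvature integrand to be $\le 0$, so after the overall minus sign this integral contributes nonnegatively. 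Adding the two nonnegative pieces gives $Hess^T_\phi(V,V)\ge 0$, which is exactly weak stability.

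I expect the only genuine subtlety to be bookkeeping with sign conventions: with the convention $R^Q(X,Y)=[\nabla_X,\nabla_Y]-\nabla_{[X,Y]}$ fixed in Section 2, one must verify that $\langle R^{Q'}(V,W)W,V\rangle$ coincides with the transversal sectional curvature of the plane spanned by $V$ and $W$ up to the positive normalizing factor, so that $K^{Q'}\le 0$ pushes the curvature term in the stabilizing direction. Everything else is routine, and the hypothesis that $M$ is closed enters only to discard boundary terms in the integration by parts that identifies $\int_M\langle\nabla_{\rm tr}^*\nabla_{\rm tr}V,V\rangle\mu_M$ with $\int_M|\nabla_{\rm tr}V|^2\mu_M$.
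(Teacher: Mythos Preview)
Your proposal is correct and coincides with the paper's own reasoning: the corollary is stated there without a separate proof, as an immediate consequence of the second variation formula $Hess^T_\phi(V,V)=\int_M\langle J^T_\phi(V),V\rangle\mu_M$ together with the positive semi-definiteness of $\nabla_{\rm tr}^*\nabla_{\rm tr}$ recorded after (\ref{2-4}) and the curvature hypothesis $K^{Q'}\le 0$. You have simply spelled out in full the two-term splitting and the sign check that the paper leaves implicit.
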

\begin{rem} For the stability of transversally harmonic map (that is, $\tau_b(\phi)=0$), see \cite[Corollary 4.6]{JU3}. In fact, under the same assumption, a transversally harmonic map is transversally $f$-stable, that is, $\int_M\langle (J_\phi^T-\nabla_{\kappa_B^\sharp})V,V\rangle e^{-f}\mu_M\geq 0$, where $f$ is a basic function such that $\kappa_B=-df$.
\end{rem}

\subsection{$(\mathcal F,\mathcal F')$-biharmonic maps}

We define  the {\it transversal bienergy functional} $\tilde E_{B,2}(\phi)$ on a compact domain $\Omega$  by
 \begin{align}\label{4-5}
\tilde E_{B,2}(\phi;\Omega):=\frac12\int_\Omega |\tilde\tau_b(\phi)|^2\mu_M.
\end{align}
\begin{defn}  A foliated map $\phi:(M,g,\mathcal F)\to (M',g',\mathcal F')$ is said to be {\it $(\mathcal F,\mathcal F')$-biharmonic map} if $\phi$ is a critical point of the transversal bienergy functional $\tilde E_{B,2}(\phi)$.
\end{defn}
\begin{thm} (The first variation formula)  For a foliated map $\phi$,
\begin{align}
{d\over dt}\tilde E_{B,2}(\phi_t;\Omega)\Big|_{t=0} =-\int_\Omega \langle J^T_\phi(\tilde\tau_b(\phi)),V\rangle\mu_M,
\end{align}
where $V={d\phi_t\over dt}\Big|_{t=0}$ is the variation vector field of a foliated variation $\phi_t$ of $\phi$.
\end{thm}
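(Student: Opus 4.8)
The plan is to differentiate the transversal bienergy directly and then recast the result using the self-adjointness of the transversal Jacobi operator. Writing $\Phi(x,t)=\phi_t(x)$ and letting $\nabla^\Phi$ denote the pull-back connection on $\Phi^{-1}Q'$, I would first use the metric compatibility of $\nabla^\Phi$ to pull the $t$-derivative through the integral in \eqref{4-5}:
\[
\frac{d}{dt}\tilde E_{B,2}(\phi_t;\Omega)=\frac12\int_\Omega \frac{\partial}{\partial t}|\tilde\tau_b(\phi_t)|^2\,\mu_M=\int_\Omega \big\langle \nabla^\Phi_{\partial/\partial t}\tilde\tau_b(\phi_t),\,\tilde\tau_b(\phi_t)\big\rangle\,\mu_M.
\]
Evaluating at $t=0$ reduces the entire problem to identifying the single section $\nabla^\Phi_{\partial/\partial t}\tilde\tau_b(\phi_t)\big|_{t=0}\in\Gamma\phi^{-1}Q'$.

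The key step is the transversal commutation identity already isolated in the proof of \cref{thm:secondvar} (equation \eqref{3-17}). For the one-parameter family $\phi_t$ with normal variation field $V=\frac{d\phi_t}{dt}\big|_{t=0}$ and a local transversal orthonormal frame $\{E_a\}$, the same computation gives
\[
\nabla^\Phi_{\partial/\partial t}\tilde\tau_b(\phi_t)=\sum_a \nabla^\Phi_{E_a}\nabla^\Phi_{E_a}\,d\Phi(\tfrac{\partial}{\partial t})-\nabla^\Phi_{\kappa_B^\sharp}\,d\Phi(\tfrac{\partial}{\partial t})+\sum_a R^\Phi(\tfrac{\partial}{\partial t},E_a)\,d\Phi(E_a).
\]
I would emphasize that, unlike in \cref{thm:secondvar}, no harmonicity of $\phi$ is needed here: harmonicity was used there only to discard the term $\langle \partial^2\phi_{s,t}/\partial s\partial t,\tilde\tau_b\rangle$, whereas this commutation formula arises purely from commuting $\nabla^\Phi_{\partial/\partial t}$ past the transversal trace, using $[\partial/\partial t,E_a]=0$, the symmetry $\nabla^\Phi_X d\Phi(Y)=\nabla^\Phi_Y d\Phi(X)$ when $[X,Y]=0$, and the definition of $R^\Phi$. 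Setting $t=0$, recalling that the first two terms assemble into $-\nabla_{\rm tr}^*\nabla_{\rm tr}$ via \eqref{2-4} and that $d\Phi(\tfrac{\partial}{\partial t})|_{t=0}=V$, this collapses to
\[
\nabla^\Phi_{\partial/\partial t}\tilde\tau_b(\phi_t)\big|_{t=0}=-\nabla_{\rm tr}^*\nabla_{\rm tr}V+{\rm tr}_Q R^{Q'}(V,d_T\phi)d_T\phi=-J^T_\phi(V).
\]

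Combining the two steps yields $\frac{d}{dt}\tilde E_{B,2}(\phi_t;\Omega)\big|_{t=0}=-\int_\Omega \langle J^T_\phi(V),\tilde\tau_b(\phi)\rangle\,\mu_M$. Since $\tilde\tau_b(\phi)\in\Gamma\phi^{-1}Q'$ and the foliated variation is supported in the compact set $\Omega$, I would finish by invoking the self-adjointness of $J^T_\phi$ from \eqref{3-16} with $W=\tilde\tau_b(\phi)$, which moves the operator off $V$ and onto $\tilde\tau_b(\phi)$ without boundary contributions, giving $-\int_\Omega \langle J^T_\phi(V),\tilde\tau_b(\phi)\rangle\,\mu_M=-\int_\Omega \langle J^T_\phi(\tilde\tau_b(\phi)),V\rangle\,\mu_M$, as claimed. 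The one genuinely delicate point is the transversal commutation identity: one must keep track of the mean-curvature correction $\kappa_B^\sharp$ (which is absorbed into $\nabla_{\rm tr}^*\nabla_{\rm tr}$ through \eqref{2-4}) and verify that the frame-dependent terms recombine into the invariant operator $J^T_\phi$ together with the correct normal projections to $Q'$; the differentiation and the self-adjointness steps are then routine.
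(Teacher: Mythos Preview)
Your proposal is correct and follows essentially the same route as the paper: differentiate the bienergy under the integral to obtain $\int_\Omega\langle\nabla^\Phi_{\partial/\partial t}\tilde\tau_b(\phi_t)|_{t=0},\tilde\tau_b(\phi)\rangle\mu_M$, invoke the commutation computation (3.17) and its consequence (3.21) from the proof of the second variation formula for $E_B$ to identify this derivative with $-J^T_\phi(V)$, and finish with the self-adjointness (3.16). Your observation that the harmonicity hypothesis of that earlier theorem is not used in deriving (3.17)--(3.21) is exactly right and is the only point the paper's terse proof leaves implicit.
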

\begin{proof}  
Let $\Phi:M \times (-\epsilon,\epsilon)\to M'$ be a smooth map, which is defined by $\Phi(x,t)=\phi_{t}(x)$. Let $\nabla^\Phi$ be the pull-back connection on $\Phi^{-1}Q'$. It is trivial that
$[X,{\partial\over\partial t}]=0$ for any vector field $X\in TM$. 
 From (\ref{4-5}), we have
 \begin{align}\label{4-7}
 {d\over dt}\tilde E_{B,2}(\phi_t;\Omega)\Big|_{t=0} = \int_\Omega \langle \nabla^\Phi_{d\over dt}\tilde\tau_b(\phi_t) |_{t=0},\tilde\tau_b(\phi)\rangle\mu_M.
 \end{align}
From (\ref{3-16}), (\ref{3-21}) and (\ref{4-7}),  we finish the proof.
 \end{proof}

From the first variation formula for the transversal bienergy functional, we know the following fact.
\begin{prop}  A $(\mathcal F,\mathcal F')$-biharmonic map $\phi$ is a solution of the following equation
\begin{align}\label{3-23}
(\tilde\tau_2)_b(\phi):=J^T_\phi(\tilde\tau_b(\phi))=0.
\end{align}
Here $(\tilde\tau_2)_b(\phi)$ is called the {\it $(\mathcal F,\mathcal F')$-bitension field} of $\phi$. 
\end{prop}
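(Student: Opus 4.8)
The plan is to derive the Euler--Lagrange equation directly from the first variation formula in Theorem 3.7. The statement claims that $\phi$ is $(\mathcal F,\mathcal F')$-biharmonic if and only if $(\tilde\tau_2)_b(\phi):=J^T_\phi(\tilde\tau_b(\phi))=0$, so the whole argument reduces to a standard fundamental-lemma-of-the-calculus-of-variations step applied to the expression already computed above.

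First I would recall that by Definition 3.6, $\phi$ is $(\mathcal F,\mathcal F')$-biharmonic precisely when $\phi$ is a critical point of $\tilde E_{B,2}$, i.e. when
\begin{align*}
{d\over dt}\tilde E_{B,2}(\phi_t;\Omega)\Big|_{t=0}=0
\end{align*}
for every compactly supported foliated variation $\{\phi_t\}$ and every compact domain $\Omega$. By Theorem 3.7 this derivative equals $-\int_\Omega \langle J^T_\phi(\tilde\tau_b(\phi)),V\rangle\mu_M$, where $V$ is the normal variation vector field. Hence criticality is equivalent to the vanishing of this integral for all admissible $V$.

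The core step is then to invoke the arbitrariness of $V$. Given any normal vector field $V_0\in\Gamma\phi^{-1}Q'$ with compact support inside a chosen $\Omega$, one constructs a foliated variation $\phi_t$ whose variation vector field at $t=0$ is exactly $V_0$; this is the usual exponential-map construction $\phi_t(x)=\exp_{\phi(x)}(tV_0(x))$, carried out transversally so that the variation remains foliated. Feeding such $V$ into the formula forces $\int_\Omega\langle J^T_\phi(\tilde\tau_b(\phi)),V_0\rangle\mu_M=0$ for all compactly supported $V_0$, and the fundamental lemma yields $J^T_\phi(\tilde\tau_b(\phi))=0$ pointwise. Conversely, if this field vanishes then the first variation is identically zero, so $\phi$ is critical; this gives the ``if'' direction immediately.

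The only genuine subtlety I expect is verifying that the variations used are \emph{foliated}, so that $V$ ranges over a large enough class to conclude pointwise vanishing rather than merely vanishing in some weaker averaged sense. One must check that the transversal exponential construction produces a variation through foliated maps and that its variation field can be prescribed to be an arbitrary compactly supported section of $\phi^{-1}Q'$; since $\tilde\tau_b(\phi)$ and hence $J^T_\phi(\tilde\tau_b(\phi))$ are themselves sections of $\phi^{-1}Q'$, matching the test class to the target is exactly what makes the fundamental lemma applicable. Everything else is formal, as the hard analytic content (the self-adjointness in \eqref{3-16} and the identification \eqref{3-21}) has already been established in proving Theorem 3.7.
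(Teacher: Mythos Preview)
Your argument is correct and is precisely the approach the paper intends: it states the proposition as an immediate consequence of the first variation formula (Theorem 3.7) without spelling out the fundamental-lemma step. Your only addition is the explicit discussion of why the class of admissible variation fields is rich enough, which the paper leaves implicit.
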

\begin{rem} (1) From Remark 3.2, if $\phi$ is a basic function on $M$, then 
\begin{align*}
(\tilde\tau_2)_b(\phi) = J^T_\phi(\tilde\tau_b( \phi)) = -J^T_\phi (\Delta_B \phi) = -\nabla_{tr}^*\nabla_{tr}(\Delta_B \phi) =\Delta_B^2 \phi.
\end{align*}
So $(\mathcal F,\mathcal F')$-biharmonic map is a generalization of basic biharmonic function.

(2)  A $(\mathcal F,\mathcal F')$-harmonic map is trivially $(\mathcal F,\mathcal F')$-biharmonic map.

(3)  There is another kinds of biharmonic map on foliations, called {\it transversally biharmonic map}, which is a solution of $(\tau_2)_b(\phi):=J_\phi^T(\tau_b(\phi))-\nabla_{\kappa_B^\sharp}\tau_b(\phi) =0$ \cite{JU3}. 
Actually,  transversally biharmonic map is a critical point of the transversal $f$-bienergy functional $E_{2,f}(\phi)$, which is defined by
\begin{align*}
E_{2,f}(\phi) =\frac12\int_M |\tau_b(\phi)|^2 e^{-f} \mu_M,
\end{align*}
where $f$ is a solution of $\kappa_B=-df$.
\end{rem}
\subsection{Generalized Chen's conjecture}
Now, we consider the generalized Chen's conjecture for  $(\mathcal F,\mathcal F')$-biharmonic maps.  
\begin{thm} Let $(M,g,\mathcal F)$ be a foliated Riemannian manifold and let $(M',g',\mathcal F')$ be of non-positive transversal sectional curvature, that is, $K^{Q'}\leq 0$. Let $\phi:M\to M'$ be a $(\mathcal F,\mathcal F')$-biharmonic map. Then

 (1) if $M$ is closed, then $\phi$ is automatically $(\mathcal F,\mathcal F')$-harmonic;

(2) if $M$ is complete with $Vol(M)=\infty$ and $\tilde E_{B,2}(\phi)<\infty$, then  $\phi$ is $(\mathcal F,\mathcal F')$-harmonic;

(3) If $M$ is complete with $E_B(\phi)<\infty$ and $\tilde E_{B,2}(\phi)<\infty$, then $\phi$ is $(\mathcal F,\mathcal F')$ -harmonic.
\end{thm}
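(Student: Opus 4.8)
The plan is to exploit the Weitzenb\"ock-type identity (\ref{weitzenbock}) applied to the $E$-valued basic form $\tilde\tau_b(\phi)$, combined with the biharmonicity equation $J^T_\phi(\tilde\tau_b(\phi))=0$ and the curvature sign assumption $K^{Q'}\leq 0$. First I would set $\Psi=\tilde\tau_b(\phi)\in\Gamma\phi^{-1}Q'$ (a basic section, i.e.\ a $0$-form valued in $E$) and compute $\langle J^T_\phi(\Psi),\Psi\rangle$ directly from the definition of $J^T_\phi$. Since the biharmonic hypothesis gives $J^T_\phi(\Psi)=0$, pairing with $\Psi$ yields
\begin{align*}
0=\langle \nabla_{tr}^*\nabla_{tr}\Psi,\Psi\rangle-\langle {\rm tr}_Q R^{Q'}(\Psi,d_T\phi)d_T\phi,\Psi\rangle.
\end{align*}
The key observation is that the curvature term $\langle {\rm tr}_Q R^{Q'}(\Psi,d_T\phi)d_T\phi,\Psi\rangle$ equals $-\sum_a K^{Q'}$-weighted sectional-curvature contributions of the planes spanned by $\Psi$ and $d_T\phi(E_a)$, so the hypothesis $K^{Q'}\leq 0$ forces this term to be $\leq 0$; hence the identity shows $\langle \nabla_{tr}^*\nabla_{tr}\Psi,\Psi\rangle\leq 0$ pointwise once integrated against the curvature term, while $\nabla_{tr}^*\nabla_{tr}$ is positive definite.

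For part (1), when $M$ is closed I would integrate the pointwise Weitzenb\"ock identity (\ref{weitzenbock}) over $M$. Because $\Psi$ is a $0$-form, the term $F(\Psi)$ drops out (it involves $\theta^a\wedge$), and the $A_{\kappa_B^\sharp}\Psi$ contribution together with $\frac12\int_M\Delta_B|\Psi|^2\mu_M$ should be handled by the transversal divergence theorem (Theorem~\ref{thm1-1}): one writes $\frac12\Delta_B|\Psi|^2$ in divergence form and uses that $\delta_B\kappa_B=0$ for a suitable bundle-like metric, so the integral of the Laplacian term reduces to a boundary-free expression that vanishes. What survives is
\begin{align*}
0=\int_M|\nabla_{tr}\Psi|^2\mu_M+\int_M\langle {\rm tr}_Q R^{Q'}(\Psi,d_T\phi)d_T\phi,\Psi\rangle\mu_M,
\end{align*}
and since both integrands are $\geq 0$ (the first by positivity of the norm, the second because $K^{Q'}\leq 0$ makes the curvature pairing nonnegative after the sign in $J^T_\phi$), each must vanish. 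In particular $\nabla_{tr}\Psi=0$, so $|\Psi|$ is constant; a further integration of the curvature identity or the divergence theorem then forces $\Psi=0$, i.e.\ $\tilde\tau_b(\phi)=0$, which is exactly $(\mathcal F,\mathcal F')$-harmonicity.

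For parts (2) and (3), where $M$ is noncompact, the obstruction is that one cannot integrate by parts freely, so I would introduce a cutoff (Gaffney-type) argument: choose basic cutoff functions $\eta_R$ with $\eta_R\equiv 1$ on a transversal ball of radius $R$, $|\nabla\eta_R|\leq C/R$, and apply the divergence theorem to the compactly supported vector field associated to $\eta_R^2\langle\nabla_{tr}\Psi,\Psi\rangle$. The finiteness $\tilde E_{B,2}(\phi)<\infty$ controls $\int_M|\Psi|^2\mu_M$, which is what makes the cross terms involving $\nabla\eta_R$ tend to zero as $R\to\infty$; in case (3) the extra hypothesis $E_B(\phi)<\infty$ bounds $\int_M|d_T\phi|^2\mu_M$ and is needed precisely to control the curvature term $\int_M\eta_R^2\langle {\rm tr}_Q R^{Q'}(\Psi,d_T\phi)d_T\phi,\Psi\rangle$ uniformly, while in case (2) the condition $Vol(M)=\infty$ together with $\nabla_{tr}\Psi=0$ (forcing $|\Psi|$ constant with finite $L^2$ norm) immediately gives $\Psi=0$. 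The main obstacle will be making the cutoff estimates rigorous: showing that the boundary/gradient terms genuinely vanish in the limit requires careful use of Cauchy--Schwarz together with the two finiteness assumptions, and one must verify that the mean-curvature terms $A_{\kappa_B^\sharp}$ and $\nabla_{\kappa_B^\sharp}$ arising from the foliated Weitzenb\"ock formula do not spoil the sign, which is where the basic-harmonicity $\delta_B\kappa_B=0$ of the metric is essential.
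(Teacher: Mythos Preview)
Your Weitzenb\"ock set-up and the overall strategy for parts (1) and (2) are essentially what the paper does, but there are two places where the proposal does not close.

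\medskip

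\textbf{Part (1): the final step is missing.} From $\nabla_{tr}\tilde\tau_b(\phi)=0$ you only get that $|\tilde\tau_b(\phi)|$ is constant; on a closed manifold this does \emph{not} by itself force $\tilde\tau_b(\phi)=0$, and ``a further integration of the curvature identity'' does not produce it either (the curvature term need not be strictly negative). The paper supplies the missing ingredient: set $Y=\sum_a\langle d_T\phi(E_a),\tilde\tau_b(\phi)\rangle E_a$; using $\nabla_{E_a}\tilde\tau_b(\phi)=0$ one computes ${\rm div}_\nabla Y=\langle\tau_b(\phi),\tilde\tau_b(\phi)\rangle$. The transversal divergence theorem (Theorem~\ref{thm1-1}) then gives $\int_M\langle\tau_b(\phi),\tilde\tau_b(\phi)\rangle=\int_M\langle d_T\phi(\kappa_B^\sharp),\tilde\tau_b(\phi)\rangle$, and since $\tilde\tau_b(\phi)=\tau_b(\phi)-d_T\phi(\kappa_B^\sharp)$ this is exactly $\int_M|\tilde\tau_b(\phi)|^2=0$. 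This step is the heart of (1) and your sketch does not contain it.

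\medskip

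\textbf{Part (3): the role of $E_B(\phi)<\infty$ is misidentified, and your argument does not conclude.} The curvature term $\sum_a\langle R^{Q'}(\tilde\tau_b(\phi),d_T\phi(E_a))d_T\phi(E_a),\tilde\tau_b(\phi)\rangle$ is $\leq 0$ by hypothesis, so it has a favourable sign in the Weitzenb\"ock identity and needs no control; $E_B(\phi)<\infty$ is \emph{not} used there. Your cutoff scheme (or Yau's maximum principle, which is what the paper actually invokes in lieu of cutoffs) yields at best $|\tilde\tau_b(\phi)|=\mathrm{const}$ and $\nabla_{tr}\tilde\tau_b(\phi)=0$. But in case (3) there is no assumption $Vol(M)=\infty$, so a nonzero constant with finite $L^2$-norm is perfectly possible, and your argument stops. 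The paper closes the gap by a Gaffney-type theorem: define the basic $1$-form $\omega(X)=\langle d_T\phi(X),\tilde\tau_b(\phi)\rangle$; the two finiteness hypotheses combine via Cauchy--Schwarz to give $\int_M|\omega|\,\mu_M\leq 2\sqrt{E_B(\phi)\,\tilde E_{B,2}(\phi)}<\infty$, and using $\nabla_{tr}\tilde\tau_b(\phi)=0$ one computes $\delta_B\omega=-|\tilde\tau_b(\phi)|^2$. Gaffney's theorem for complete manifolds then forces $\int_M\delta_B\omega=0$, hence $\tilde\tau_b(\phi)=0$. So $E_B(\phi)<\infty$ enters only to put $\omega\in L^1$, which is what makes Gaffney applicable; this is the idea your proposal is missing.

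\medskip

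For part (2) your plan is fine in spirit; the paper replaces the explicit cutoff estimate by quoting Yau's $L^p$ Liouville theorem for subharmonic functions (after observing $\Delta_B|\tilde\tau_b(\phi)|\leq 0$ via Kato's inequality), which is cleaner but equivalent.
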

\begin{proof}
Let $\phi:M\to M'$ be a $(\mathcal F,\mathcal F')$-biharmonic map.  Then from (\ref{3-23})
\begin{align}\label{4-12}
(\nabla_{tr}^\phi)^*(\nabla_{tr}^\phi)\tilde\tau_b(\phi) -\sum_a R^{Q'}(\tilde\tau_b(\phi),d_T\phi(E_a))d_T\phi(E_a) =0,
\end{align}
where $\{E_a\}$ be a local orthonomal basic frame of $Q$. 
From the generalized Weitzenbock formula (\ref{eq4-6}) and (\ref{weitzenbock}), we have
\begin{align}
\frac12\Delta_B |\tilde\tau_b(\phi)|^2 = \langle \nabla_{tr}^*\nabla_{tr} \tilde\tau_b(\phi),\tilde\tau_b(\phi)\rangle -|\nabla_{tr}\tilde\tau_b(\phi)|^2.
\end{align}
Hence from (\ref{4-12}), we get
\begin{align*}
\frac12\Delta_B |\tilde\tau_b(\phi)|^2  = -|\nabla_{tr}\tilde\tau_b(\phi)|^2 +\sum_a \langle R^{Q'}(\tilde\tau_b(\phi),d_T\phi(E_a))d_T\phi(E_a),\tilde\tau_b(\phi)\rangle.
\end{align*}
That is,
\begin{align}\label{3-26}
|\tilde\tau_b(\phi)|\Delta_B |\tilde\tau_b(\phi)| = |d_B |\tilde\tau_b(\phi)||^2 -|\nabla_{tr}\tilde\tau_b(\phi)|^2 +\sum_a \langle R^{Q'}(\tilde\tau_b(\phi),d_T\phi(E_a))d_T\phi(E_a),\tilde\tau_b(\phi)\rangle.
\end{align}
By the Kato's inequality, that is, $|\nabla_{tr}\tilde\tau_b(\phi)|\geq |d_B|\tilde\tau_b(\phi)||$, and   $K^{Q'}\leq 0$, we have
\begin{align}\label{3-27}
\frac12\Delta_B |\tilde\tau_b(\phi)| \leq 0.
\end{align}
That is, $|\tilde\tau_b(\phi)|$ is basic subharmonic. 
(1) If $M$ is closed, then  $|\tilde\tau_b(\phi)|$ is trivially constant.   From (\ref{3-26}), we have that  for all $a$,
\begin{align}\label{4-15}
\nabla_{E_a}\tilde\tau_b(\phi)=0.
\end{align}
Now, we define the normal vector field $Y$  by
\begin{align*}
Y=\sum_a \langle d_T\phi(E_a),\tilde\tau_b(\phi)\rangle E_a.
\end{align*}
Then  from  (\ref{4-15}), we have
\begin{align}\label{3-30}
{\rm div}_\nabla (Y) = \sum_a\langle \nabla_{E_a}Y,E_a\rangle =\langle \tau_b(\phi),\tilde\tau_b(\phi)\rangle.
\end{align}
So by integrating  (\ref{3-30}) and by using the transversal divergence theorem (Theorem 2.1), we get
\begin{align}
\int_M |\tilde\tau_b(\phi)|^2 \mu_M =0,
\end{align}
which implies that $\tilde\tau_b(\phi)=0$, that is, $\phi$ is the $(\mathcal F,\mathcal F')$-harmonic map.

(2) Let $M$ be a complete Riemannian manifold.  Note that  for any basic 1-form $\omega$,  it is trivial that
$\delta_B\omega =\delta\omega$ and so $\Delta_B f= \Delta f$ for any basic function $f$.  Hence by the Yau's maximum principle \cite[Theorem 3]{YA}, we have following lemma.
\begin{lem}
 If a nonnegative basic function $f$ is basic-subharmonic, that is, $\Delta_B f\leq 0$, with $\int_M f^p <\infty \ (p>1)$, then $f$ is constant.
\end{lem}
Since $\tilde E_{B,2}(\phi)<\infty$, by (\ref{3-27}) and Lemma 3.11,  $|\tilde\tau_b(\phi)|$ is constant. Moreover,  since $Vol(M)=\infty$,  $\int_M |\tilde\tau_B(\phi)|^2\mu_M<\infty$ implies $\tilde\tau_b(\phi)=0$, that is, $\phi$ is $(\mathcal F,\mathcal F')$-harmonic.

(3)  Now we define a basic 1-form $\omega$ on $M$ by
\begin{align}
\omega (X) = \langle d_T\phi (X), \tilde\tau_b(\phi)\rangle
\end{align}
for any normal vector field $X$. By using the Schwartz inequality, we get
\begin{align*}
\int_M |\omega|\mu_M &=\int_M \Big(\sum_a |\omega(E_a)|^2\Big)^{\frac12}\mu_M\\
&=\int_M \Big(\sum_a |\langle d_T\phi(E_a),\tilde\tau_b(\phi)\rangle|^2\Big)^{\frac12}\mu_M\\
&\leq\int_M |d_T\phi| |\tilde\tau_b(\phi)|\mu_M\\
&\leq \Big(\int_M |d_T\phi|^2\mu_M \Big)^{\frac12} \Big(\int_M |\tilde\tau_b(\phi)|^2\mu_M\Big)^{\frac12}\\
&=2\sqrt{E_B(\phi) E_{B,2}(\phi)} <\infty.
\end{align*}
On the other hand, by a straight calculation, we know that
\begin{align}
\delta_B\omega = -|\tilde\tau_b(\phi)|^2.
\end{align}
Since $\int_M |\omega|\mu_M <\infty$ and $\int_M (\delta_B)\omega\mu_M = -\tilde E_{B,2}(\infty) <\infty$, by the Gaffney's theorem \cite{GA}, we know that 
\begin{align}
\int_M |\tilde\tau_b(\phi)|^2\mu_M=-\int_M (\delta_B\omega) \mu_M =-\int_M(\delta\omega)\mu_M=0.
\end{align} 
Hence $\tilde\tau_b(\phi)=0$, that is, $\phi$ is $(\mathcal F,\mathcal F')$-harmonic.
\end{proof}

\begin{rem}  Note that for transversally biharmonic map, we need some conditions that  the transversal Ricci curvature of $M$ is nonnegative and positive at some point  (cf. \cite[Theorem 6.5]{JU3}).
\end{rem}
Now, we study the second variation formula for the transversal bienergy functional $\tilde E_{B,2}(\phi)$. 
\begin{thm}  (The second variation formula) For a  foliated map $\phi:(M,g,\mathcal F)\to (M',g',\mathcal F')$, we  have
\begin{align*}
{d^2\over dt^2} \tilde E_{B,2}(\phi_t;\Omega)\Big|_{t=0} =& -\int_\Omega \langle \nabla_VV,(\tilde\tau_2)_b(\phi)\rangle\mu_M +\int_\Omega |J^T_\phi(V)|^2\mu_M -\int_\Omega \langle R^{Q'}(V,\tilde\tau_b(\phi))\tilde\tau_b(\phi),V\rangle\mu_M\\
&-4\int_M \langle R^{Q'}(\nabla_{tr}V,\tilde\tau_b(\phi)) d_T\phi, V\rangle \mu_M  +\int_\Omega\langle (\nabla_{\tilde\tau_b(\phi)} R^{Q'})(V,d_T\phi)d_T\phi,V\rangle\mu_M\\
&+2\int_\Omega\langle (\nabla_{tr} R^{Q'})(d_T\phi,V)\tilde\tau_b(\phi),V\rangle\mu_M,
\end{align*}
where $V={d\phi_t\over dt}\Big|_{t=0}$ is the normal variation vector field of $\{\phi_t\}$.
\end{thm}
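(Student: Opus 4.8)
The plan is to differentiate the first variation formula \eqref{4-7} once more in $t$. Writing $\Phi(x,t)=\phi_t(x)$, $\nabla_t:=\nabla^\Phi_{\partial/\partial t}$ and $V_t:=d\Phi(\partial/\partial t)$ (so $V_0=V$), formula \eqref{4-7} reads $\frac{d}{dt}\tilde E_{B,2}(\phi_t;\Omega)=\int_\Omega\langle\nabla_t\tilde\tau_b(\phi_t),\tilde\tau_b(\phi_t)\rangle\mu_M$ for every $t$. Differentiating again and using metric compatibility of $\nabla^\Phi$ gives
\[
\frac{d^2}{dt^2}\tilde E_{B,2}(\phi_t;\Omega)=\int_\Omega\langle\nabla_t\nabla_t\tilde\tau_b(\phi_t),\tilde\tau_b(\phi_t)\rangle\mu_M+\int_\Omega|\nabla_t\tilde\tau_b(\phi_t)|^2\mu_M.
\]
By \eqref{3-21} we have $\nabla_t\tilde\tau_b(\phi_t)|_{t=0}=-J^T_\phi(V)$, so at $t=0$ the second integral immediately contributes the term $\int_\Omega|J^T_\phi(V)|^2\mu_M$. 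Everything then reduces to computing $\nabla_t\nabla_t\tilde\tau_b(\phi_t)|_{t=0}$ and pairing it with $\tilde\tau_b(\phi)$.

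For that I would start from the one-parameter analogue of the explicit expression \eqref{3-17} for $\nabla_t\tilde\tau_b(\phi_t)$ and apply $\nabla_t$ once more, commuting $\nabla_t$ past each spatial derivative $\nabla_{E_a}$ and past $\nabla_{\kappa_B^\sharp}$ by the Ricci identity for the pull-back connection, $\nabla_t\nabla_{E_a}-\nabla_{E_a}\nabla_t=R^\Phi(\partial/\partial t,E_a)$ (using $[\partial/\partial t,E_a]=0$). At $t=0$ one has $R^\Phi(\partial/\partial t,E_a)=R^{Q'}(V,d_T\phi(E_a))$, $\nabla_tV_t|_{t=0}=\nabla_VV$ and $\nabla_td\Phi(E_a)|_{t=0}=\nabla_{E_a}V=(\nabla_{tr}V)(E_a)$, while differentiating the curvature term of \eqref{3-17} additionally generates $(\nabla_V R^{Q'})$ contributions. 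Collecting, $\nabla_t\nabla_t\tilde\tau_b(\phi_t)|_{t=0}$ splits into three types of terms: the acceleration piece $-J^T_\phi(\nabla_VV)$ coming from the transversal Laplacian acting on $\nabla_VV$; quadratic curvature pieces built from $R^{Q'}(V,d_T\phi(E_a))\nabla_{E_a}V$, $R^{Q'}(\nabla_{tr}V,\tilde\tau_b)d_T\phi$ and $R^{Q'}(V,\tilde\tau_b)\tilde\tau_b$; and first-covariant-derivative-of-curvature pieces.

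I would then integrate the pairing $\langle\nabla_t\nabla_t\tilde\tau_b|_{t=0},\tilde\tau_b(\phi)\rangle$ over $\Omega$ and reorganize into Hessian form, with every surviving term paired against the variation field $V$. The acceleration contribution $-\langle J^T_\phi(\nabla_VV),\tilde\tau_b(\phi)\rangle$ becomes $-\langle\nabla_VV,J^T_\phi(\tilde\tau_b(\phi))\rangle=-\langle\nabla_VV,(\tilde\tau_2)_b(\phi)\rangle$ by the self-adjointness \eqref{3-16} of $J^T_\phi$ together with the definition \eqref{3-23}. The quadratic curvature terms are consolidated using the pair-symmetry and the antisymmetries of $R^{Q'}$, and the derivative-of-curvature terms are redistributed using the second Bianchi identity (this is what converts the raw $(\nabla_V R^{Q'})$ into the stated $(\nabla_{\tilde\tau_b}R^{Q'})(V,d_T\phi)d_T\phi$ and $(\nabla_{tr}R^{Q'})(d_T\phi,V)\tilde\tau_b$ terms); the transversal divergence theorem (Theorem~\ref{thm1-1}) is applied to integrate by parts the terms carrying a loose $\nabla_{E_a}$. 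The several copies of the $R^{Q'}(\nabla_{tr}V,\tilde\tau_b)d_T\phi$-type integrand coalesce into the single coefficient $-4$, and the Bianchi rearrangement yields the coefficient $+2$ on the $(\nabla_{tr}R^{Q'})$ term.

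The hard part will be precisely this last bookkeeping: tracking every term produced by commuting derivatives and by differentiating $R^\Phi$, and recombining them with the correct signs and multiplicities (notably the $-4$ and $+2$) through the symmetries of the transversal curvature tensor, the second Bianchi identity, and repeated integration by parts. A foliation-specific subtlety is that each use of Theorem~\ref{thm1-1} requires the contracted vector fields to project to \emph{basic} sections, which must be verified for the auxiliary one-forms assembled from $V$, $d_T\phi$, $\tilde\tau_b(\phi)$ and $R^{Q'}$; moreover the mean-curvature corrections $\nabla_{\kappa_B^\sharp}$ hidden inside $\nabla_{tr}^*\nabla_{tr}$ must be carried throughout and are exactly what makes the self-adjointness step \eqref{3-16} nontrivial.
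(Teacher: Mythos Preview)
Your proposal is correct and follows essentially the same route as the paper: differentiate the first variation once more, use \eqref{3-21} for the $|J^T_\phi(V)|^2$ term, compute $\nabla_t\nabla_t\tilde\tau_b(\phi_t)|_{t=0}$ from \eqref{3-17} via the Ricci identity, then reorganize using self-adjointness of $J^T_\phi$, the second Bianchi identity, and the symmetries of $R^{Q'}$. One small remark: the paper carries out the final rearrangement purely algebraically (pair symmetry and antisymmetry of $R^{Q'}$ suffice to turn $\langle R^{Q'}(V,d_T\phi(E_a))\nabla_{E_a}V,\tilde\tau_b\rangle$ into $-\langle R^{Q'}(\nabla_{E_a}V,\tilde\tau_b)d_T\phi(E_a),V\rangle$), so the transversal divergence theorem is not actually invoked at that step.
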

\begin{proof}
 Let $\Phi:M \times (-\epsilon,\epsilon)\to M'$ be a smooth map, which is defined by $\Phi(x,t)=\phi_{t}(x)$. Let $\nabla^\Phi$ be the pull-back connection on $\Phi^{-1}Q'$. It is trivial that
$[X,{\partial\over\partial t}]=0$ for any vector field $X\in TM$. 
 From definition,  we have
\begin{align*}
{d^2\over dt^2} \tilde E_{B,2} (\phi_t;\Omega) =\int_\Omega \langle \nabla^\Phi_{d\over dt}\nabla^\Phi_{d\over dt} \tilde\tau_b(\phi_t),\tilde\tau_b(\phi_t)\rangle\mu_M + \int_\Omega |\nabla^\Phi_{d\over dt}\tilde\tau_b(\phi_t)|^2\mu_M.
\end{align*}
Let $\{E_a\}$ be a local orthonormal basic frame on $Q$ such that $\nabla^\Phi E_a=0$ at $x\in M$. From (\ref{3-17}), we have
\begin{align*}
\nabla^\Phi_{d\over dt}\nabla^\Phi_{d\over dt}\tilde\tau_b(\phi_t)=&\sum_a\nabla^\Phi_{E_a}\nabla^\Phi_{E_a}\nabla^\Phi_{d\over dt}d\Phi({d\over dt}) -\nabla^\Phi_{\kappa_B^\sharp}\nabla^\Phi_{d\over dt} d\Phi({d\over dt})+R^\Phi(\kappa_B^\sharp,{d\over dt})d\Phi({d\over dt})\\
& + \sum_a \nabla^\Phi_{E_a}R^\Phi({d\over dt},E_a)d\Phi({d\over dt}) + \sum_a\nabla_{d\over dt}^\Phi R^\Phi({d\over dt},E_a) d\Phi(E_a)\\
&+\sum_aR^\Phi({d\over dt},E_a)\nabla^\Phi_{E_a} d\Phi({d\over dt}).
\end{align*}
At $t=0$,  since $d\Phi({d\over dt})|_{t=0} = {d\phi_t\over dt}|_{t=0} =V$, we have
\begin{align*}
\nabla^\Phi_{d\over dt}\nabla^\Phi_{d\over dt}\tilde\tau_b(\phi_t)\Big|_{t=0}=&\sum_a\nabla_{E_a}\nabla_{E_a}\nabla_VV -\nabla_{\kappa_B^\sharp}\nabla_VV+R^{Q'}(d_T\phi(\kappa_B^\sharp),V)V\\
& + \sum_a \nabla_{E_a}R^{Q'}(V,d_T\phi(E_a))V + \sum_a\nabla_V R^{Q'}(V,d_T\phi(E_a)) d_T\phi(E_a)\\
&+\sum_aR^{Q'}(V,d_T\phi(E_a))\nabla_{E_a} V.
\end{align*}
By a straight calculation together with the Bianchi identities, we have
\begin{align*}
\sum_a \nabla_{E_a}R^{Q'}(V,d_T\phi(E_a))V&=\sum_a (\nabla_{E_a}R^{Q'})(V,d_T\phi(E_a))V + R^{Q'}(V,\tau_b(\phi))V\\
& + 2\sum_a  R^{Q'}(V,d_T\phi(E_a))\nabla_{E_a}V -\sum_a R^{Q'}(V,\nabla_{E_a}V)d_T\phi(E_a)
\end{align*}
and
\begin{align*}
\sum_a \nabla_V R^{Q'}(V,d_T\phi(E_a))d_T\phi(E_a)=& \sum_a(\nabla_V R^{Q'})(V,d_T\phi(E_a))d_T\phi(E_a)\\
& + \sum_a R^{Q'}(\nabla_VV,d_T\phi(E_a))d_T\phi(E_a)\\
&+\sum_a R^{Q'}(V,\nabla_{E_a}V)d_T\phi(E_a) \\
&+ \sum_a R^{Q'}(V,d_T\phi(E_a))\nabla_{E_a}V.
\end{align*}
By summing the above equations, we have
\begin{align*}
\nabla_{d\over dt}\nabla_{d\over dt}\tilde\tau_b(\phi_t)\Big|_{t=0}=& -J^T_\phi(\nabla_VV ) + R^{Q'}(V,\tilde\tau_b(\phi))V +\sum_a(\nabla_V R^{Q'})(V,d_T\phi(E_a))d_T\phi(E_a)\\
& + \sum_a(\nabla_{E_a}R^{Q'})(V,d_T\phi(E_a))V + 4 \sum_aR^{Q'}(V,d_T\phi(E_a))\nabla_{E_a}V.
\end{align*}
Then by integrating, we get
\begin{align*}
\int_\Omega \langle \nabla^\Phi_{d\over dt}\nabla^\Phi_{d\over dt} \tilde\tau_b(\phi_t)\Big|_{t=0},\tilde\tau_b(\phi)\rangle =& -\int_\Omega \langle J_\phi^T(\nabla_VV),\tilde\tau_b(\phi)\rangle +\int_\Omega\langle R^{Q'}(V,\tilde\tau_b(\phi))V,\tilde\tau_b(\phi)\rangle\\
&+\sum_a\int_\Omega \langle (\nabla_V R^{Q'})(V,d_T\phi(E_a))d_T\phi(E_a),\tilde\tau_b(\phi)\rangle\\
&+\sum_a\int_\Omega\langle (\nabla_{E_a}R^{Q'})(V,d_T\phi(E_a))V,\tilde\tau_b(\phi)\rangle\\
&+4\sum_a\int_\Omega\langle R^{Q'}(V,d_T\phi(E_a))\nabla_{E_a}V,\tilde\tau_b(\phi)\rangle.
\end{align*}
From the second Bianchi identity, we get
\begin{align*}
\langle (\nabla_V R^{Q'})(V,d_T\phi(E_a))d_T\phi(E_a),\tilde\tau_b(\phi)\rangle =& \langle (\nabla_{E_a}R^{Q'})(V,d_T\phi(E_a))V,\tilde\tau_b(\phi)\rangle\\
&+\langle (\nabla_{\tilde\tau_b(\phi)}R^{Q'})(V,d_T\phi(E_a))d_T\phi(E_a),V\rangle.
\end{align*}
From the above equation, we get
\begin{align*}
\int_\Omega \langle \nabla^\Phi_{d\over dt}\nabla^\Phi_{d\over dt} \tilde\tau_b(\phi_t)\Big|_{t=0},\tilde\tau_b(\phi)\rangle =& -\int_\Omega \langle J_\phi^T(\nabla_VV),\tilde\tau_b(\phi)\rangle +\int_\Omega\langle R^{Q'}(V,\tilde\tau_b(\phi))V,\tilde\tau_b(\phi)\rangle\\
&+\sum_a\int_\Omega \langle (\nabla_{\tilde\tau_b(\phi)} R^{Q'})(V,d_T\phi(E_a))d_T\phi(E_a),V\rangle\\
&+2\sum_a\int_\Omega\langle (\nabla_{E_a}R^{Q'})(V,d_T\phi(E_a))V,\tilde\tau_b(\phi)\rangle\\
&+4\sum_a\int_\Omega\langle R^{Q'}(V,d_T\phi(E_a))\nabla_{E_a}V,\tilde\tau_b(\phi)\rangle.
\end{align*}
From the above equation and (\ref{3-21}), by using the curvature properties and  self-adjointness of $J^T_\phi$, the proof follows. 
\end{proof}
\begin{defn} A $(\mathcal F,\mathcal F')$-biharmonic map $\phi:(M,g,\mathcal F)\to (M',g',\mathcal F')$ is said to be {\it weakly stable} if ${d^2\over dt^2}\tilde E_{B,2} (\phi_t)\Big|_{t=0}\geq 0$.
\end{defn}
Now, we consider the generalized Chen's conjecture for $(\mathcal F,\mathcal F')$-biharmonic map when the transversal sectional curvature  of $M'$ is positive, that is, $K^{Q'}>0$.  In case of  $K^{Q'}\leq 0$, see Theorem 3.10. 

Let us recall the transversal stress-energy tensor $S_T(\phi)$ of $\phi$ \cite{CW,JU3}:
\begin{align}
S_T(\phi) =\frac12|d_T\phi|^2 g_Q -\phi^* g_{Q'}.
\end{align}
Note that for any vector field $X\in\Gamma Q$,
\begin{align}
({\rm div}_\nabla S_T(\phi) )(X)= -\langle \tau_b(\phi),d_T\phi(X)\rangle.
\end{align}
If ${\rm div}_\nabla S_T(\phi)=0$, then we say that $\phi$ satisfies the {\it transverse conservation law} \cite{CW}.
\begin{thm}
Let $(M,g,\mathcal F)$ be a  closed foliated Riemannian manifold and $(M',g',\mathcal F')$ be a foliated Riemannian manifold with a positive constant transversal sectional curvature $K^{Q'}$.   If a $(\mathcal F,\mathcal F')$-biharmonic map $\phi:M\to M'$ is weakly stable and satisfies the transverse conservation law,  then $\phi$ is $(\mathcal F,\mathcal F')$-harmonic. 
\end{thm}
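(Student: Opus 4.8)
The plan is to probe the weak stability of $\phi$ with the single canonical variation field $V=\tilde\tau_b(\phi)$ and to show that, under the stated hypotheses, the second variation collapses to a non-positive multiple of $\int_M|\tilde\tau_b(\phi)|^4\mu_M$. Since weak stability forces this quantity to be nonnegative while $K^{Q'}$ is a positive constant, the only possibility will be $\tilde\tau_b(\phi)\equiv 0$, i.e. that $\phi$ is $(\mathcal F,\mathcal F')$-harmonic.

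First I would substitute $V=\tilde\tau_b(\phi)$ into the second variation formula just established and discard the terms that vanish for structural reasons. Because $\phi$ is $(\mathcal F,\mathcal F')$-biharmonic, equation \eqref{3-23} gives $(\tilde\tau_2)_b(\phi)=J^T_\phi(\tilde\tau_b(\phi))=0$, and this single fact annihilates both the leading term $-\int\langle\nabla_VV,(\tilde\tau_2)_b(\phi)\rangle\mu_M$ and the term $\int|J^T_\phi(V)|^2\mu_M$. The term $\int\langle R^{Q'}(V,\tilde\tau_b(\phi))\tilde\tau_b(\phi),V\rangle\mu_M$ vanishes by antisymmetry of $R^{Q'}$ in its first two arguments once $V=\tilde\tau_b(\phi)$. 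Finally, since $K^{Q'}$ is a \emph{constant}, the target curvature tensor is parallel, $\nabla R^{Q'}=0$, so the two terms carrying $\nabla_{\tilde\tau_b(\phi)}R^{Q'}$ and $\nabla_{tr}R^{Q'}$ drop out. What survives is exactly the cross term $-4\int_M\sum_a\langle R^{Q'}(\nabla_{E_a}\tilde\tau_b(\phi),\tilde\tau_b(\phi))\,d_T\phi(E_a),\tilde\tau_b(\phi)\rangle\mu_M$.

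The heart of the matter is the evaluation of this surviving cross term. Using the constant-curvature identity $R^{Q'}(X,Y)Z=K^{Q'}\bigl(\langle Y,Z\rangle X-\langle X,Z\rangle Y\bigr)$, I would expand the integrand into a piece proportional to $\sum_a\omega(E_a)\,E_a(|\tilde\tau_b(\phi)|^2)$ and a piece proportional to $|\tilde\tau_b(\phi)|^2\sum_a\langle\nabla_{E_a}\tilde\tau_b(\phi),d_T\phi(E_a)\rangle$, where $\omega$ is the basic $1$-form $\omega(X)=\langle d_T\phi(X),\tilde\tau_b(\phi)\rangle$ from the proof of Theorem 3.10(3). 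The transverse conservation law $\operatorname{div}_\nabla S_T(\phi)=0$, i.e. $\langle\tau_b(\phi),d_T\phi(X)\rangle=0$, is precisely what controls these two contractions; combined with the identity $\delta_B\omega=-|\tilde\tau_b(\phi)|^2$ and an integration by parts via the transversal divergence theorem (Theorem~\ref{thm1-1}, whose $\kappa_B^\sharp$–correction absorbs the mean-curvature contributions on the closed manifold $M$, exactly as in the proof of Theorem 3.10(1)), it collapses the whole cross term to $-4K^{Q'}\int_M|\tilde\tau_b(\phi)|^4\mu_M$.

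Assembling the pieces, weak stability yields $0\le \tfrac{d^2}{dt^2}\tilde E_{B,2}(\phi_t)\big|_{t=0}=-4K^{Q'}\int_M|\tilde\tau_b(\phi)|^4\mu_M$, and since $K^{Q'}>0$ this forces $\int_M|\tilde\tau_b(\phi)|^4\mu_M=0$, hence $\tilde\tau_b(\phi)\equiv 0$ and $\phi$ is $(\mathcal F,\mathcal F')$-harmonic. I expect the main obstacle to be the bookkeeping in the cross-term reduction: one must track the signs dictated by the convention $R^{Q}=[\nabla,\nabla]-\nabla_{[\,\cdot\,,\,\cdot\,]}$ and, crucially, one must invoke the conservation law at the right moment so that the \emph{a priori} positive contribution $2K^{Q'}\int_M|\tilde\tau_b(\phi)|^4\mu_M$ (which is all one obtains if the conservation law is ignored) is outweighed by the contraction term and the net sign becomes negative. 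Securing that sign, together with the correct use of Theorem~\ref{thm1-1} to dispose of the $\kappa_B^\sharp$ terms, is the whole difficulty; everything else is the routine vanishing already indicated.
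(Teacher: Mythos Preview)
Your plan is essentially the paper's own argument: take $V=\tilde\tau_b(\phi)$ in the second variation formula, use $(\tilde\tau_2)_b(\phi)=0$, the antisymmetry of $R^{Q'}$, and $\nabla R^{Q'}=0$ to reduce to the single cross term, then expand via the constant-curvature formula and use the transverse conservation law together with the transversal divergence theorem to obtain $-4K^{Q'}\int_M|\tilde\tau_b(\phi)|^4\mu_M$. One point you gloss over but the paper makes explicit: after the divergence theorem and the conservation law are applied, a residual term of the form $\int_M\langle\nabla_{\kappa_B^\sharp}\tilde\tau_b(\phi),\tilde\tau_b(\phi)\rangle\mu_M$ remains, and the paper disposes of it by choosing a bundle-like metric with $\delta_B\kappa_B=0$ (so that this integral equals $\tfrac12\int_M\langle\delta_B\kappa_B,|\tilde\tau_b(\phi)|^2\rangle\mu_M=0$); you should make this step explicit rather than folding it into the phrase ``$\kappa_B^\sharp$--correction absorbs the mean-curvature contributions''.
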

\begin{proof}
Let $\phi:M\to M'$ be a $(\mathcal F,\mathcal F')$-biharmonic map, that is, $(\tilde\tau_2)_b(\phi)=0$.
Let $K^{Q'}=c>0$, where $c$ is a positive constant. Then for any $X,Y,Z\in\Gamma Q'$
\begin{align}
R^{Q'}(X,Y)Z =c\{\langle Y,Z\rangle X -\langle X,Z\rangle Y.
\end{align}
 So  $(\nabla_X R^{Q'})(Y,Z) =0$.  Hence if we take $V=\tilde\tau_b(\phi)$ in Theorem 3.13, then
\begin{align*}
{d^2\over dt^2} \tilde E_{B,2}(\phi_t)\Big|_{t=0} =& 
 -4\int_M \langle R^{Q'}(\nabla_{tr}\tilde\tau_b(\phi),\tilde\tau_b) d_T\phi, \tilde\tau_b(\phi)\rangle \mu_M \\
=&-4c\int_M\langle \tilde\tau_b(\phi),d_T\phi\rangle\langle\nabla_{tr}\tilde\tau_b(\phi),\tilde\tau_b(\phi)\rangle \mu_M\\
&+4c\int_M\langle d_T\phi,\nabla_{tr}\tilde\tau_b(\phi)\rangle|\tilde\tau_b(\phi)|^2\mu_M\\
=&-4c\int_M \langle \tau_b(\phi),\tilde\tau_b(\phi)\rangle |\tilde\tau_b(\phi)|^2\mu_M \\
&+4c\sum_a\int_M E_a(\langle d_T\phi(E_a),\tilde\tau_b(\phi)\rangle|\tilde\tau_b(\phi)|^2)\mu_M\\
&-12c\sum_a\int \langle d_T\phi(E_a),\tilde\tau_b(\phi)\rangle \langle \nabla_{E_a}\tilde\tau_b(\phi),\tilde\tau_b(\phi)\rangle \mu_M
\end{align*}
If we choose a normal vector field  $X$ as
\begin{align*}
\langle X,Y\rangle =\langle \tilde\tau_b(\phi),d_T\phi(Y)\rangle |\tilde\tau_b(\phi)|^2
\end{align*}
for any normal vector field $Y$, then
\begin{align*}
{\rm div}_\nabla X = \sum_a E_a (\langle \tilde\tau_b(\phi),d_T\phi(E_a)\rangle |\tilde\tau_b(\phi)|^2 ).
\end{align*}
Hence by the transversal divergence theorem, we have
\begin{align*}
\int\sum_a E_a(\langle d_T\phi(E_a),\tilde\tau_b(\phi)\rangle|\tilde\tau_b(\phi)|^2)\mu_M &= \int {\rm div}_\nabla (X)\mu_M = \int \langle X,\kappa_B^\sharp\rangle\mu_M \\
&=\int \langle d_T\phi(\kappa_B^\sharp),\tilde\tau_b(\phi)\rangle |\tilde\tau_b(\phi)|^2 \mu_M.
\end{align*}
Combining the above equations, we have
\begin{align*}
{d^2\over dt^2} \tilde E_{B,2}(\phi_t)\Big|_{t=0} =&-4c\int_M |\tilde\tau_b(\phi)|^4\mu_M \\
&-12c\sum_a\int_M \langle d_T\phi(E_a),\tilde\tau_b(\phi)\rangle \langle \nabla_{E_a}\tilde\tau_b(\phi),\tilde\tau_b(\phi)\rangle \mu_M
\end{align*}
Since $\phi$ satisfies the transverse conservation law, that is, $({\rm div}_\nabla S_T(\phi))(X)=0$ for any $X$,  we have
\begin{align*}
\langle \tau_b(\phi),d_T\phi(E_a)\rangle = ({\rm div}_\nabla S_T(\phi))(E_a)=0.
\end{align*}
So  if we choose the bundle-like metric such that $\delta_B\kappa_B=0$, then
\begin{align*}
\int_M&\sum_a \langle\tilde\tau_b(\phi),d_T\phi(E_a)\rangle \langle\nabla_{E_a}\tilde\tau_b(\phi),\tilde\tau_b(\phi)\rangle\mu_M \\&= -\int_M\sum_a\langle d_T\phi(\kappa_B^\sharp),d_T\phi(E_a)\rangle \langle \nabla_{E_a}\tilde\tau_b(\phi),\tilde\tau_b(\phi)\rangle\mu_M\\
&=-\int _M\langle\nabla_{\kappa_B^\sharp} \tilde\tau_b(\phi),\tilde\tau_b(\phi)\rangle \mu_M\\
&=-\frac12\int_M \langle\delta_B\kappa_B, |\tilde\tau_b(\phi)|\rangle\mu_M\\
& =0.
\end{align*}
Hence we have
\begin{align*}
{d^2\over dt^2} \tilde E_{B,2}(\phi_t)\Big|_{t=0} =-4c\int  |\tilde\tau_b(\phi)|^4\mu_M.
\end{align*}
Since $\phi$ is weakly stable and $c>0$,  we have $\tilde\tau_b(\phi)=0$, that is, $\phi$ is $(\mathcal F,\mathcal F')$-harmonic.
\end{proof}
\begin{rem} The generalized Chen's conjectures for the transversally biharmonic map have been studied in \cite{JU3,JJ2} under some additional conditions such that the transversal Ricci curvature of $M$ is nonnegative.
\end{rem}

\end{document}